\newtheorem{mytheorem}{Theorem}
\newtheorem{mylemma}{Lemma}
\newtheorem{myclaim}{Claim}
\begin{document}


\begin{frontmatter}

\date{}

\title{Bounds on performance measures for the $(r, q)$ lost sales system with Poisson demand and constant lead time}

\author[btj]{Bryan Johnston}
\ead{btj@umich.edu}
\address[btj]{837 Princeton Rd., Berkley, MI 48072}

\begin{abstract}
We demonstrate that the fraction of sales lost for the $(r,q)$ system under consideration can be conveniently bounded in a manner suitable for quick, back-of-the-envelope estimates.
We assume that customer demand arises from a Poisson process with one unit demanded at a time, that all demand occurring during a stockout is lost, and that lead time is constant.
In addition, we allow the situation where multiple replenishment orders may be simultaneously outstanding.
We show that the difference between the upper and lower bounds on the the fraction of sales lost appears likely to be no more than $6.5\%$ ($0.065$), and is typically significantly less than that.
The bounds appear to be relatively tight when the service level is either very high or very low.
In addition, we relate the common performance measures for the system, and so enable bounding the total operating costs.
\end{abstract}

\begin{keyword}
Inventory \sep Lost sales \sep Service level \sep Fill rate \sep Bounds \sep Approximation
\end{keyword}

\end{frontmatter}

\tableofcontents


\section{Introduction} \label{SECTION_INTRO}

Lost sales are common in the fast-paced retail environment, and are also used as a mechanism to model emergency orders placed to fulfil customer demand in the event of a stockout (\citet{BIJVANK_2011, DONSELAAR_2013}).
Computerized warehouse systems with automatic bin level tracking now make continuous review of inventory on hand levels a common practice. In addition, increasingly predictable replenishment processes make constant, or nearly constant, lead time a reasonable assumption in many inventory replenishment environments.
We refer the reader to \citet{BIJVANK_2011} for an overview of the lost sales inventory literature.

On the other hand, not much is known about the form of the optimal policy for continuous review systems with lost sales (\citet{BIJVANK_2011}). The ${(r, q)}$ and ${ (s, S) }$ policies are straightforward, easy for inventory controllers to manage, are are thus commonly employed.
Even though such lost sales inventory systems are a classical topic (\citet{HW_1963}), analytic results related to $(r, q)$ lost sales systems are not common in the literature.
Indeed, the $(r, q)$ lost sales system with unit sized and constant lead time is generally acknowledged as particulary difficult to tackle.
Much of this difficulty seems to arise from the fact that, unlike for backorder systems, the lead times remaining on outstanding replenishment orders need to be explicitly taken into account in order to produce the system of integro-differential equations that models the system. The resulting equations have not so far been amenable to analytic solution.

In the literature, three types of simplifications are commonly employed in order to produce analytic results for lost sales systems.
The most common approach is to assume that $r < q$, so that at any particular moment there is at most one outstanding order.
The second approach is to assume that one or both of demand size or lead time is stochastic, for example as in \citet{MOHEBBI_2002}, \citet{JOHANSEN_2004}, and \citet{JOHANSEN_1993}.
The third approach is to assume that the control parameters for the system are set so that the fraction of sales lost is sufficiently small as to be modelled by a backorder system.

Results concerning service level restrictions are also sparse in the literature (\citet{BIJVANK_2011, AARDAL_1989, DONSELAAR_2013}).

Even when analytic results like those mentioned above are available, the expressions involved are not necessarily simple to evaluate or employ in practice.

Thus the need remains to provide simple, closed-form, easy-to-implement analytic results that provide some guarantee for the fraction of sales lost, and other performance measures for $(r, q)$ continuous review systems. To this end, we will examine the $(r, q)$ continuous review system with lost sales, Poisson unit demand, and constant lead time, and derive simple bounds on the fraction of sales lost.

This paper is organized as follows. In Section \ref{SECTION_RPM}, we relate long run fraction of sales lost, average inventory level, average inventory position, and average number of units in outstanding replenishment orders. We show that any one of them determines all the others. In Sections \ref{SECTION_LBG} and \ref{SECTION_UBG}, we obtain lower and upper bounds for the long run fraction of sales lost. In Section \ref{SECTION_NR}, we present some numerical results in order to get a feel for how tight the lower and upper bounds are.

\subsection{Notation}

Table \ref{TABLE_NOTATION} introduces notation used throughout the paper.
The only items which require explanation are $\rho _k$ and $\rho _k (t)$.
In a standard $(r, q)$ system with constant lead time $\tau$, the delivery epoch for the $k$th order (placed at epoch $t_k$) is $\rho_k = t_k + \tau$.
This does not change as the system evolves.
In Section \ref{LTR_SYSTEMS} we allow the remaining lead time on replenishment orders to decrease in response to customer demands. This occurs only when necessary to prevent a stockout situation.
In that context we think of the delivery epoch as a \emph{projected} delivery epoch $\rho_k (t)$, measured at a particular epoch $t$.

\begin{table}[h]
\centering
\caption{
    Notation
} \label{TABLE_NOTATION}
\begin{tabularx}{\columnwidth}{|rX|}
\hline

  $r$                                                 & Reorder point. \\
  $q$                                                 & Order quantity.  \\
  $\tau$                                              & Lead time (constant).  \\
  $\lambda$                                           & Poisson customer demand rate.  \\
  $x$                                                 & $ \lambda \tau $ (mean lead time demand) \\
  \shortstack{
      $\mathscr{S}$,
      $\mathscr{T}$,
      $\overline{\mathscr{S}}$,
      $\overline{\mathscr{T}}$, etc.
  }                                                   & Inventory systems.  \\
  $l(t)$, $l(\mathscr{S}, t)$                         & On hand inventory, at epoch $t$, for system $\mathscr{S}$.  \\
  $u(t)$, $u(\mathscr{S}, t)$                         & Units in outstanding orders, at epoch $t$, for system $\mathscr{S}$. \\
  $p(t)$, $p(\mathscr{S}, t)$                         & Inventory position, at epoch $t$,  for system $\mathscr{S}$. $p(t) = l(t) + u(t)$. \\
  $ \gamma $                                          & Long run fraction of time with no on hand inventory. \\
  $L$, $L(\mathscr{S})$                               & Long run average on hand inventory. \\
  $L(\mathscr{S}, [t_1, t_2])$                        & Average inventory level on the interval $[t_1, t_2]$. \\
  $U$                                                 & Long run average number of units  in outstanding replenishment orders. \\
  $P$                                                 & Long run average inventory position. $P = L + U$. \\
  $\mathscr{O}_k$                                     & The $k$th replenishment order for a system. \\
    $\rho_k$, $\rho_k (t)$                            & The projected delivery epoch, measured at some epoch $t$, of the $k$th replenishment order for a system, and subject to change in \emph{Lead Time Reducing} systems (Section \ref{LTR_SYSTEMS}). \\
  $\omega _k$                                         & The epoch at which $k$th replenishment order was delivered. \\

\hline
\end{tabularx}
\end{table}


\section{Relationship between performance measures $\gamma$, $P$, $L$, $U$} \label{SECTION_RPM}

We are mainly concerned with the $(r, q)$ continuous review system with lost sales, Poisson unit demand, and constant lead time. Let $\mathscr{S}$ be such an $(r, q)$ system.

To relate time spent in the stockout state to inventory position, excise from sample paths of $\mathscr{S}$ all time spent in the stockout state.
These modified sample paths also see Poisson-distributed customer demand, since Poisson demand is forgetful.
Each customer demand occurrence seen by a modified sample path decreases the inventory position by $1$ (modulo $q$).
Thus the inventory position for the modified system is uniformly distributed in ${ r+1, r+2, ..., r+q }$, and has the average value
\\
$$\overline{P} := \frac{1}{q} \Big\{ (r+1) + ... + (r+q) \Big\}  = r + \frac{1}{2}(q+1)$$
\\
The inventory position corresponding to $l(\mathscr{S}, t) = 0$ is $p(\mathscr{S}, t) = q \left\lfloor \frac{\displaystyle r+q}{\displaystyle q} \right\rfloor $, so that
\begin{align}
  P &= (1 - \gamma) \overline{P} + \gamma q \left\lfloor \frac{r+q}{q} \right\rfloor \nonumber \\
    &= (1 - \gamma) \Big( r + \frac{1}{2}(q+1) \Big) + \gamma q \left\lfloor \frac{r+q}{q} \right\rfloor \label{RELATE_P_TO_GAMMA}
\end{align}

The relationship between average on hand inventory, average inventory position, and average number of units in outstanding replenishment orders is well-known and is given by
\\
\begin{align}
  L &= P - U
  \label{RELATE_L_TO_P_AND_U}
\end{align}
\\
See \citet{ZIPKIN_2000, AXSATER_2007, TIJMS_2003, HW_1963}. \\

The relationship between average number of units in outstanding replenishment orders and fraction of time with no on hand inventory is equally straightforward.
Outstanding replenishment orders can be thought of as a queue, with one order added to the queue for each q units of inventory sold.
The average rate orders are placed is $\displaystyle \frac{(1-\gamma) \lambda}{q}$.
The average time spent in the queue is the lead time $\tau$, and so the average number of outstanding orders in the queue is $\displaystyle \frac{(1-\gamma) \lambda \tau}{q}$ by Little's Law (\citet{AXSATER_2007, TIJMS_2003}).
Since there are $q$ units per order, the average number of units in outstanding replenishment orders is

\begin{align}
  U &= (1-\gamma) \lambda \tau     \nonumber \\
    &= (1-\gamma) x
  \label{RELATE_U_TO_GAMMA}
\end{align}
\\
We can now use relationships (\ref{RELATE_L_TO_P_AND_U}), (\ref{RELATE_U_TO_GAMMA}), then (\ref{RELATE_P_TO_GAMMA}) to express the average on hand inventory level in terms of the fraction of time $\gamma$ with no on hand inventory as

\begin{align}
  L &= P - U                         \nonumber \\
    &= P - (1-\gamma) x              \nonumber \\
    &= \Big\{
          (1 - \gamma) \Big( r + \frac{1}{2}(q+1) \Big) + \gamma q \left\lfloor \frac{r+q}{q} \right\rfloor
        \Big\}  \nonumber \\
        &\qquad - (1-\gamma) x              \nonumber \\
  L &= (1 - \gamma) \Big( r + \frac{1}{2}(q+1) - x \Big) + \gamma q \left\lfloor \frac{r+q}{q} \right\rfloor
  \label{RELATE_L_TO_GAMMA}
\end{align}
\\
In summary, we have

\begin{mytheorem} \label{RELATIONS_THEOREM}
The relationships between $\gamma$, $L$, $P$, and $U$ are given by \\
  a) $ \displaystyle P = (1 - \gamma) \Big( r + \frac{1}{2}(q+1) \Big) + \gamma q \left\lfloor \frac{r+q}{q} \right\rfloor $ \\
  b) $ L = P - U $ \\
  c) $ U = (1-\gamma) x $ \\
  d) $ \displaystyle L = (1 - \gamma) \Big( r + \frac{1}{2}(q+1) - x \Big) + \gamma q \left\lfloor \frac{r+q}{q} \right\rfloor $ \\
Any one of $\gamma$, $L$, $P$, or $U$ determines all of the others.
\end{mytheorem}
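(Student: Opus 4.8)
The plan is to assemble the four identities by treating each as a statement about long-run averages of the pointwise quantities $l(\mathscr{S},t)$, $u(\mathscr{S},t)$, $p(\mathscr{S},t)$, and then to read off the final invertibility claim from the fact that each of $P$, $L$, $U$ is an affine function of $\gamma$.

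First I would establish (a), which I expect to be the crux. The idea is the excision argument already sketched: delete from each sample path every interval on which $l(\mathscr{S},t)=0$, and observe that on the surviving time the demand process is still Poisson of rate $\lambda$ by the memorylessness of the exponential inter-arrival times, so no probabilistic mass is distorted by the deletion. On the excised path the inventory position strictly decreases by one at each demand and jumps by $q$ at each reorder, so between successive demands it rests at a fixed value for an $\mathrm{Exp}(\lambda)$ time and performs a cyclic walk through the $q$ values $r+1,\dots,r+q$; a renewal-reward (or symmetry) argument then shows each value is occupied a fraction $1/q$ of the excised time, giving the excised average $\overline{P}=\frac{1}{q}\sum_{j=1}^{q}(r+j)=r+\tfrac12(q+1)$. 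Separately, whenever $l(\mathscr{S},t)=0$ the position equals the number of outstanding units, which is a multiple of $q$; since there is exactly one multiple of $q$ in the window $\{r+1,\dots,r+q\}$, that value is forced to be $q\lfloor (r+q)/q\rfloor$, and it stays constant throughout a stockout because lost demand does not move the position. Weighting the excised average (fraction $1-\gamma$ of real time) against the stockout value (fraction $\gamma$) yields (a). The delicate points here are justifying that the deletion preserves the Poisson law and that the long-run occupation measure is genuinely uniform; both hinge on the unit demand size and the forgetfulness of the Poisson process.

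The remaining identities are lighter. For (b) I would simply take long-run time averages of the pointwise accounting identity $p=l+u$. For (c) I would model the outstanding orders as a queue: an order enters every time $q$ units are sold, so the arrival rate is $(1-\gamma)\lambda/q$ (the factor $1-\gamma$ discounting the time spent stocked out, during which no sales occur), each order resides in the queue for exactly the constant lead time $\tau$, and Little's Law gives a mean of $(1-\gamma)\lambda\tau/q$ orders; multiplying by the $q$ units per order gives $U=(1-\gamma)\lambda\tau=(1-\gamma)x$. Identity (d) is then pure substitution of (a) and (c) into (b).

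Finally, the claim that any one of the four quantities determines the rest follows because (a), (c), (d) exhibit $P$, $U$, $L$ each as an affine function of the single parameter $\gamma$. The map $\gamma\mapsto U=(1-\gamma)x$ has slope $-x\neq 0$, so it is a bijection and $\gamma$, hence all four quantities, is recoverable from $U$; the analogous statement holds for $P$ and $L$ whenever their slopes, $q\lfloor(r+q)/q\rfloor-\overline{P}$ and the corresponding expression for $L$, are nonzero, which is the generic case. Thus fixing any one pins down $\gamma$ and therefore the others.
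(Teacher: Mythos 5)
Your proposal follows essentially the same route as the paper: the excision/forgetfulness argument giving a uniform position distribution on $\{r+1,\dots,r+q\}$ for part (a), the standard accounting identity $p=l+u$ for (b), Little's Law applied to the queue of outstanding orders for (c), and substitution for (d). Your one genuine addition is the observation that recovering $\gamma$ from $P$ or $L$ requires the relevant affine slope to be nonzero --- a degenerate possibility (e.g.\ $r=1$, $q=3$ gives $q\left\lfloor \frac{r+q}{q}\right\rfloor = r+\frac{1}{2}(q+1) = 3$, so $P$ is constant in $\gamma$) that the paper's unconditional final sentence silently ignores.
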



\section{A lower bound for $\gamma$} \label{SECTION_LBG}

Let $\text{LOSS}(x, r)$ be the \emph{loss function} given by
\\
$$ \text{LOSS}(x, r) = \displaystyle \sum_{k = r}^{\infty}(k - r) \frac{x^k}{k!} e^{-x}$$
\\
For an $(r, q)$ lost sales policy $\mathscr{S}$ where \emph{there can never be more than one simulataneous outstanding order}, the long run fraction of time with no on hand inventory is given by
\\
$$\gamma = \frac{\text{LOSS}(x, r)}{\text{LOSS}(x, r) + q} $$
\\
(see \citet{ZIPKIN_2000, HW_1963}).
In \citet{ZIPKIN_2000}, Zipkin suggests that this may provide a useful approximation in the situation when there may be multiple outstanding orders (when $\left\lfloor \frac{r+q}{q} \right\rfloor > 1$).
When there can only be one outstanding order, $q = q \left\lfloor \frac{r+q}{q} \right\rfloor $, and it leads toward the question of what one can say about
\\
$$ \mathscr{LB} := \frac{\text{LOSS}(x, r)}{\text{LOSS}(x, r) + q \left\lfloor \frac{r+q}{q} \right\rfloor}$$
\\
when $\left\lfloor \frac{r+q}{q} \right\rfloor > 1$.
We will show that it in general provides a lower bound for the fraction of time spent with no on hand inventory.

\begin{mytheorem} \label{LB_THEOREM}
For any $(r, q)$ lost sales system $\mathscr{S}$ with constant lead time $\tau$, Poisson customer demand rate $\lambda$, and $x = \lambda \tau$, we have $\mathscr{LB} \le \gamma $.
This is an equality when $r < q$.
\end{mytheorem}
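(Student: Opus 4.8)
The plan is to reduce the claim to a single inequality about lost sales per order cycle and then prove that inequality by a covering argument over lead-time windows. First I would record a rate balance. Inventory position falls by one at each satisfied demand and rises by $q$ at each order placement, so exactly $q$ demands are satisfied between consecutive placements; writing $\bar N$ for the long-run average number of lost sales per such cycle, the long-run fraction of demand lost is $\bar N/(q+\bar N)$, and because Poisson arrivals see time averages this fraction is exactly $\gamma$. Since $f(y):=y/(q+y)$ is increasing and $\mathscr{LB}=f\!\big(\text{LOSS}(x,r)/\lfloor\frac{r+q}{q}\rfloor\big)$, it suffices to prove the single inequality $\lfloor\frac{r+q}{q}\rfloor\,\bar N\ge \text{LOSS}(x,r)$.

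The crux is a bound on lost sales inside one lead-time window. For the $k$th order, placed at $t_k$ and delivered at $\rho_k=t_k+\tau$, let $N^W_k$ be the number of sales lost on $[t_k,\rho_k)$. I would show that the total inventory ever available to meet demand on this window is exactly $r$, regardless of how many orders are outstanding: the inventory position just after placement is $r+q$, every outstanding order other than $\mathscr{O}_k$ is delivered strictly before $\rho_k$, and the $q$ units of $\mathscr{O}_k$ itself arrive only at the excluded endpoint $\rho_k$, so the on-hand at $t_k$ plus all within-window deliveries total $p(t_k^+)-q=r$. Hence at most $r$ of the $\text{Poisson}(x)$-many demands $D_k$ in the window can be met, giving the pathwise bound $N^W_k\ge (D_k-r)^+$ and, on averaging, $\overline{N^W}\ge\text{LOSS}(x,r)$. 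I expect this step to be the main obstacle: one must argue carefully that mid-window deliveries and the precise timing of stockouts can only increase, never decrease, the window losses relative to $(D_k-r)^+$, and that the boundary windows are negligible when passing to long-run averages.

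Finally I would tie window losses to cycle losses by double counting. A demand lost at epoch $s$ is counted in window $[t_k,\rho_k)$ exactly for the orders $k$ outstanding at $s$, and the number of simultaneously outstanding orders never exceeds $\lfloor\frac{r+q}{q}\rfloor$ (since the position just after any placement is $r+q$ and on-hand is nonnegative), so each lost sale is counted at most $\lfloor\frac{r+q}{q}\rfloor$ times, giving $\sum_k N^W_k\le \lfloor\frac{r+q}{q}\rfloor\cdot(\text{total lost sales})$. Dividing by the number of cycles yields $\overline{N^W}\le\lfloor\frac{r+q}{q}\rfloor\,\bar N$, which together with the window bound proves $\lfloor\frac{r+q}{q}\rfloor\,\bar N\ge\text{LOSS}(x,r)$ and hence $\mathscr{LB}=f\!\big(\text{LOSS}(x,r)/\lfloor\frac{r+q}{q}\rfloor\big)\le f(\bar N)=\gamma$. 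For $r<q$ the floor equals $1$, there are no within-window deliveries and each stockout instant lies in exactly one window, so both inequalities become equalities, $\bar N=\text{LOSS}(x,r)$, and $\mathscr{LB}=\gamma$.
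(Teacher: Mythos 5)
Your proof is correct, and it takes a genuinely different route from the paper's. The paper proves the bound by coupling: Lemma \ref{LB_LEMMA} shows that expediting the earliest outstanding order never decreases cumulative served demand, and the proof of Theorem \ref{LB_THEOREM} applies this repeatedly to build a modified system $\overline{\mathscr{S}}$ in which all outstanding orders are collapsed at every $\lfloor\frac{r+q}{q}\rfloor$-th placement; the loss fraction of $\overline{\mathscr{S}}$ is then computed exactly as $\mathscr{LB}$, and dominance plus PASTA finishes the argument. You instead prove $\lfloor\frac{r+q}{q}\rfloor\,\bar N\ge \text{LOSS}(x,r)$ directly by conservation and covering: since orders do not cross, every outstanding order other than $\mathscr{O}_k$ is delivered strictly inside $[t_k,\rho_k)$ while $\mathscr{O}_k$ itself arrives at the excluded right endpoint, so the stock available to the window is exactly $p(t_k^+)-q=r$, giving $N^W_k\ge(D_k-r)^+$ pathwise and $\overline{N^W}\ge\text{LOSS}(x,r)$ by the strong Markov property; and a lost sale at epoch $s$ lies in window $k$ precisely when $\mathscr{O}_k$ is outstanding at $s$, which happens for at most (indeed exactly) $\lfloor\frac{r+q}{q}\rfloor$ values of $k$ because $l=0$ forces $p=q\lfloor\frac{r+q}{q}\rfloor$. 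The step you flagged as the likely obstacle is not actually one: $N^W_k\ge(D_k-r)^+$ follows immediately from the unit count since only a lower bound on losses is required, and the boundary windows contribute a bounded expectation each and so vanish in the long-run average. Your argument is more elementary (no coupling lemma, no auxiliary system) and makes visible exactly where the slack in $\mathscr{LB}$ arises, namely windows in which the $r$ available units are not on hand when demanded; what it forgoes is the reusable monotonicity machinery of Lemma \ref{LB_LEMMA}, which is close in spirit to the lead-time-reducing constructions the paper needs again for the upper bound in Section \ref{SECTION_UBG}. Both arguments invoke the Poisson assumption in the same two places (the distribution of lead-time demand and PASTA), and your treatment of the equality case $r<q$ correctly recovers the classical single-outstanding-order formula.
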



\subsection{Proof of $\mathscr{LB}$}
In order to prove Theorem \ref{LB_THEOREM}, we will need the next lemma.

\begin{mylemma} \label{LB_LEMMA}
Let $\overline{\mathscr{S}}$ and $\mathscr{S}$ be two identical copies of the same lost sales system, which both see the same customer demand realization, and which both start at the same state at epoch $t = 0$.
Suppose at some epoch $\hat{t}$, the behavior of system $\overline{\mathscr{S}}$ is modified, \emph{one time only}, so that the delivery epoch $\rho_k$ of the earliest pending replenishment order $\mathscr{O}_k$ is reduced from $\rho_k > \hat{t}$ to $\hat{t}$. The remaining lead times on any other outstanding orders remain unchanged.
Order $\mathscr{O}_k$ will then be immediately delivered for system $\overline{\mathscr{S}}$, but not for system $\mathscr{S}$.

Then, the demand served by $\overline{\mathscr{S}}$ in any interval $[0, t]$ is at least as great as the demand served by $\mathscr{S}$ in $[0, t]$.
\end{mylemma}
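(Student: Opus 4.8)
The plan is to argue on sample paths, exploiting that the two copies evolve identically on $[0,\hat t]$ and diverge only because $\overline{\mathscr{S}}$ receives $\mathscr{O}_k$ a lead time earlier. I would work with cumulative quantities measured from $\hat t$: let $A(t)$ and $\overline A(t)$ be the demand served by $\mathscr{S}$ and $\overline{\mathscr{S}}$ in $(\hat t,t]$, and let $R(t),\overline R(t)$ be the units delivered to each in $[\hat t,t]$, charging the early delivery of $\mathscr{O}_k$ to $\overline R$. Since the copies share the on-hand level at $\hat t$ and on-hand inventory obeys $l=l(\hat t)+(\text{delivered})-(\text{served})$, the statement ``$\overline{\mathscr{S}}$ serves at least as much as $\mathscr{S}$ on $[0,t]$'' is exactly $\overline A(t)\ge A(t)$ for all $t$.

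I would first isolate the structural fact that drives everything, a \emph{delivery-leading} property: whenever $\overline A(u)\ge A(u)$ on an interval, every order is delivered to $\overline{\mathscr{S}}$ no later than to $\mathscr{S}$, hence $\overline R(u)\ge R(u)$ there. To see this, note both copies hold the same inventory position $p(\mathscr{S},\hat t)=p(\overline{\mathscr{S}},\hat t)$ at $\hat t$, since a delivery leaves inventory position unchanged. Under unit Poisson demand the $(r,q)$ rule then makes each copy place its $j$-th post-$\hat t$ order exactly upon its $\big(p(\mathscr{S},\hat t)-r+(j-1)q\big)$-th served demand after $\hat t$. Matching orders by this common index, $\overline A\ge A$ forces $\overline{\mathscr{S}}$ to reach each ordering threshold no later than $\mathscr{S}$, and constant lead time $\tau$ makes the matched deliveries no later; the orders outstanding at $\hat t$ are delivered to $\overline{\mathscr{S}}$ no later as well, since only $\mathscr{O}_k$ has been moved, to the earlier epoch $\hat t$. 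Summing over matched orders gives $\overline R\ge R$.

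The heart is then a \emph{minimal-counterexample} (first-violation) step, which I expect to be the main obstacle, because the tempting stronger claim $l(\overline{\mathscr{S}},t)\ge l(\mathscr{S},t)$ is in fact false: just after $\mathscr{S}$ catches up by receiving a late delivery, $\overline{\mathscr{S}}$ can momentarily hold strictly less on hand, so I must avoid any pointwise on-hand dominance. Set $\Psi=\overline A-A$; it starts at $0$ at $\hat t$ and changes by $\pm1$ or $0$ only at demand epochs, decreasing precisely when $\mathscr{S}$ serves a demand that $\overline{\mathscr{S}}$ loses. Suppose $\Psi$ first reaches $-1$ at some epoch $t^{\ast}$. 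Then at $t^{\ast}$ a demand occurs with $l(\mathscr{S},t^{\ast-})>0=l(\overline{\mathscr{S}},t^{\ast-})$, and $\Psi(t^{\ast-})=0$ gives $\overline A(t^{\ast-})=A(t^{\ast-})$. Subtracting the two on-hand identities at $t^{\ast-}$ yields $R(t^{\ast-})-\overline R(t^{\ast-})=l(\mathscr{S},t^{\ast-})-l(\overline{\mathscr{S}},t^{\ast-})>0$, i.e.\ $R(t^{\ast-})>\overline R(t^{\ast-})$.

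Finally I would close the loop. Because $t^{\ast}$ is the \emph{first} violation, $\Psi\ge0$ holds throughout $[\hat t,t^{\ast})$, so the delivery-leading property applies on $[\hat t,t^{\ast-}]$ (the placements relevant to deliveries before $t^{\ast}$ occur by $t^{\ast}-\tau<t^{\ast}$) and gives $\overline R(t^{\ast-})\ge R(t^{\ast-})$, contradicting the strict inequality just derived. Hence no first violation exists, $\Psi(t)\ge0$ for all $t$, and $\overline{\mathscr{S}}$ serves at least as much demand as $\mathscr{S}$ on every $[0,t]$, which is the claim. I would still verify the boundary bookkeeping at $\hat t$ (equal served demand and equal inventory position, with the early delivery of $\mathscr{O}_k$ charged to $\overline R$) and the handling of simultaneous events, but these are routine.
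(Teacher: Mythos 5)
Your proposal is correct and follows essentially the same route as the paper: a first-violation argument on cumulative served demand, combined with the observation that $\overline{\mathscr{S}}$ places (hence, by constant lead times, receives) each order no later than $\mathscr{S}$ as long as no violation has yet occurred, leading to a contradiction via the inventory balance identity. Your delivery-leading step merely fills in the justification the paper asserts in one line, so the two proofs coincide in substance.
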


\begin{proof}
Assume to the contrary, and let $t$ be the smallest epoch such that ${ s[0, t] > \overline{s}[0, t] }$, where $s$ and $\overline{s}$ represent demand served for $\mathscr{S}$ and $\overline{\mathscr{S}}$.
Then ${ s[0, t] = 1 + \overline{s}[0, t] }$.
Since $t$ is the \emph{smallest} such epoch, it's clear that a customer demand must have occurred at epoch $t$, and that system $\mathscr{S}$ served this customer while $\overline{\mathscr{S}}$ did not.
Then for some small $\delta > 0$, immediately before the customer demand at epoch $t$, the inventory levels must have been $l(\mathscr{S}, t - \delta) > 0$ and $l(\overline{\mathscr{S}}, t - \delta) = 0$.
Let $a[0, t]$ and $\overline{a}[0, t]$ denote the number of replenishment units delivered on the interval $[0, t]$ for systems $\mathscr{S}$ and $\overline{\mathscr{S}}$.
At every epoch in $[0, t)$, system $\overline{\mathscr{S}}$ placed its $k$th replenishment order \emph{on or before} the epoch at which $\mathscr{S}$ placed its corresponding $k$th order.
Because lead times are constant, this implies that $a[0, t] \le \overline{a}[0, t]$.
Therefore, immediately after the customer demand at epoch $t$, we have

\begin{align*}
  0 & \le  l(\mathscr{S}, t + \delta)  \\
    & =    l(\mathscr{S}, 0) + a[0, t] - s[0, t] \\
    & =    l(\mathscr{S}, 0) + a[0, t] - (\overline{s}[0, t] + 1) \\
    & \le  l(\mathscr{S}, 0) + \overline{a}[0, t] - \overline{s}[0, t] - 1 \\
    & =    l(\overline{\mathscr{S}}, 0) + \overline{a}[0, t] - \overline{s}[0, t] - 1 \\
    & =    l(\overline{\mathscr{S}}, t) - 1  \\
    & =    -1
\end{align*}

which is a contradiction.

\end{proof}

\noindent
\emph{Remark:} The above argument also works for any customer interarrival distribution, and for any lead time distribution, provided that systems $\mathscr{S}$ and $\overline{\mathscr{S}}$ each receive the same lead time $\tau_k$ for their $k$th replenishment order.

\begin{proof}[Proof of Theorem \ref{LB_THEOREM}]
Consider epochs at which replenishment orders $\mathscr{O}_k$ are placed, with $k \equiv 0 \ (\textrm{mod}\ \left\lfloor \frac{r+q}{q} \right\rfloor)$.
At every such epoch, immediately before the new replenishment order is placed, repeatedly apply Lemma \ref{LB_LEMMA} to force all currently outstanding replenishement orders to arrive.
Call the resulting system $\overline{\mathscr{S}}$.

If $\overline{\mathscr{S}}$ places replenishment order $\mathscr{O}_{m \left\lfloor \frac{r+q}{q} \right\rfloor}$ at epoch $t$, then $l(\overline{\mathscr{S}}, t) = r$ immediately \emph{after} the order is placed.
The system $\overline{\mathscr{S}}$ now has a single outstanding order which has the pending delivery epoch $\rho_{m \left\lfloor \frac{r+q}{q} \right\rfloor} = t + \tau$.

While serving the next $q \left\lfloor \frac{r+q}{q} \right\rfloor$ units until order $\mathscr{O}_{(m + 1) \left\lfloor \frac{r+q}{q} \right\rfloor}$ is placed, lost sales can only occur in the interval $[t, t + \tau]$ while $\mathscr{O}_{m \left\lfloor \frac{r+q}{q} \right\rfloor}$ is outstanding.
For, at epoch $t + \tau + \delta$, for a small $\delta > 0$, we have
\begin{align*}
  l(\overline{\mathscr{S}}, t + \tau + \delta)
      &= r + \overline{a}(t, t + \tau] - \overline{s}(t, t + \tau] \\
      &= r + q - \overline{s}(t, t + \tau] \\
\end{align*}
whereas the amount left to be served before $\mathscr{O}_{(m + 1) \left\lfloor \frac{r+q}{q} \right\rfloor}$ is placed is
\\
$$q \left\lfloor \frac{r+q}{q} \right\rfloor - \overline{s}(t, t + \tau]$$
\\
But $r + q \ge q \left\lfloor \frac{r+q}{q} \right\rfloor$, thus the inventory on hand is at least as great as the amount to be served before order $\mathscr{O}_{(m + 1) \left\lfloor \frac{r+q}{q} \right\rfloor}$ is placed and so no lost sales can occur in the interim.

Then, we see that for system $\overline{\mathscr{S}}$ the number of lost sales that occur while serving $q \left\lfloor \frac{r+q}{q} \right\rfloor$ units is on average $LOSS(\lambda \tau, r)$, since they all occur in the interval of lenth $\tau$ after a replenishment order is placed at reorder point $r$.
For system $\overline{\mathscr{S}}$ then the long run fraction of sales that are lost is
\\
$$\frac{\text{LOSS}(x, r)}{\text{LOSS}(x, r) + q \left\lfloor \frac{r+q}{q} \right\rfloor}$$
\\
Now, a customer demand is a lost sale precisely when the inventory on hand is zero, so by the \emph{PASTA} property of Poisson arrivals (\citet{ZIPKIN_2000, TIJMS_2003}) this is the fraction of time system $\overline{\mathscr{S}}$ spends with no on hand inventory.

From Lemma \ref{LB_LEMMA}, $\overline{\mathscr{S}}$ will serve as much demand as $\mathscr{S}$ on every interval $[0, t]$, so again by \emph{PASTA} $\overline{\mathscr{S}}$ spends on average no more time in a stockout state than $\mathscr{S}$ does.
Then
\\
$$\frac{\text{LOSS}(x, r)}{\text{LOSS}(x, r) + q \left\lfloor \frac{r+q}{q} \right\rfloor} \le \gamma$$
\\
and the proof is complete.

\end{proof}


\section{An upper bound for $\gamma$} \label{SECTION_UBG}

It is well-known (\citet{TAKACS_1969, COHEN_1976}) that for the $(r, 1)$ lost sales system with constant lead time, the fraction of time with no on hand inventory is given by the \emph{Erlang loss formula}
\begin{equation}
\gamma = \frac{ \displaystyle \frac{x^{r+1}}{(r+1)!} }{ \displaystyle \sum_{k = 0}^{r + 1} \frac{x^{k}}{k!} }
          = \frac{ \displaystyle \frac{x^{r+1}}{(r+1)!} }{ \displaystyle \frac{x^{r+1}}{(r+1)!}  + \displaystyle \sum_{k = 0}^{r} \frac{x^{k}}{k!} }
\label{ERLANG_LOSS}
\end{equation}
In this situation, $q = 1$, and $q \left\lfloor \frac{r+q}{q} \right\rfloor = r + 1$, and so $\gamma$ is given by the formula
$$ \mathscr{UB} := \frac{ \displaystyle \left( \frac{r + 1}{q \left\lfloor \frac{r+q}{q} \right\rfloor} \right) \frac{x^{r+1}}{(r+1)!} }
                 { \displaystyle \left( \frac{r + 1}{q \left\lfloor \frac{r+q}{q} \right\rfloor} \right) \frac{x^{r+1}}{(r+1)!} + \displaystyle \sum_{k = 0}^{r} \frac{x^{k}}{k!} } $$
We will show that in general $\mathscr{UB}$ provides an upper bound for the fraction of sales lost.

\begin{mytheorem} \label{UB_THEOREM}
For any $(r, q)$ lost sales system $\mathscr{S}$ with constant lead time $\tau$, Poisson customer demand rate $\lambda$, and $x = \lambda \tau$, we have $\gamma \le \mathscr{UB}$.
This is an equality when $q = 1$.
\end{mytheorem}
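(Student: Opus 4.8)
The plan is to mirror the lower-bound argument in reverse: I will produce a comparison system $\mathscr{C}$, driven by the same Poisson demand as $\mathscr{S}$ and started from the same state, whose long-run stockout fraction can be evaluated in closed form and equals $\mathscr{UB}$, and which serves \emph{no more} demand than $\mathscr{S}$ on every interval $[0,t]$. Granting that, the \emph{PASTA} property converts service-domination into $\gamma \le \gamma(\mathscr{C}) = \mathscr{UB}$, exactly as at the close of the proof of Theorem \ref{LB_THEOREM}. Where that proof forced outstanding orders to arrive early (bunching replenishment into effective batches of $q\lfloor\frac{r+q}{q}\rfloor$ and thereby serving \emph{more}), here I move in the opposite direction, exposing the system to stockout as though replenishment trickled in one unit at a time, which can only serve \emph{less}.

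The first ingredient is the natural dual of Lemma \ref{LB_LEMMA}: \emph{postponing} the delivery of an outstanding order (increasing its projected delivery epoch $\rho_k$) never increases the demand served on any $[0,t]$. This is proved by the same minimal-counterexample device as Lemma \ref{LB_LEMMA} with the inequalities reversed, and the Remark following that lemma already indicates the argument is insensitive to the precise interarrival and lead-time structure, so it transfers essentially verbatim. Composing such postponements lets me treat $\mathscr{S}$ as an acceleration of any systematically delayed copy, so that establishing $\gamma \le \mathscr{UB}$ reduces to identifying the right delayed copy $\mathscr{C}$ and computing its loss.

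The second ingredient is to recognize $\mathscr{UB}$ as the blocking probability of an Erlang-type loss system. Writing $N = q\lfloor\frac{r+q}{q}\rfloor$ and $\mu = 1/\tau$, I claim $\mathscr{UB}$ is precisely the stationary loss of a birth--death loss system on the states $0,1,\dots,r+1$ with arrival rate $\lambda$ throughout, per-unit recovery rate $j\mu$ out of state $j$ for $j \le r$, but \emph{batch} recovery rate $N\mu$ out of the blocking (stockout) state $r+1$; the detailed-balance ratio $\pi_{r+1}/\pi_r = x/N$ in place of the Erlang ratio $x/(r+1)$ is exactly what produces the factor $\frac{r+1}{N}$ multiplying the top term of (\ref{ERLANG_LOSS}). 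By the insensitivity of loss systems to the service-time distribution, this blocking probability is unchanged when the lead time is the constant $\tau$ matched through its mean, and by \emph{PASTA} it equals the stockout fraction of $\mathscr{C}$; a renewal--reward accounting over a super-cycle of $N$ served units then returns $\gamma(\mathscr{C}) = \mathscr{UB}$ and exposes $\frac{r+1}{N}$ as the fraction of demand served while on-hand inventory is low enough to actually reach zero.

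The main obstacle lies in this coupling, and in particular in reconciling two competing effects. A crude maximal delay of $\mathscr{S}$ all the way to an $(r,1)$ system gives at best the Erlang value (\ref{ERLANG_LOSS}), which is too weak, since $\mathscr{UB}$ sits strictly below it whenever $q > 1$. The point of $\mathscr{C}$ is that it delays only the high-inventory, low-risk portion of each cycle---splitting the batch into unit-sized stockout exposures---while retaining the fast \emph{batch} recovery of $\mathscr{S}$ out of the stockout state; the delicate step is to verify that $\mathscr{C}$ so defined is genuinely dominated by $\mathscr{S}$ under the common-demand coupling, so that the dual lemma applies, while still recovering at the rate $N\mu$ that yields the sharper constant. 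Making the near-stockout regime and this rate-matching precise, rather than merely heuristic, is the crux of the proof. Finally, equality at $q=1$ is immediate: there $N = r+1$, the factor $\frac{r+1}{N}$ equals $1$, $\mathscr{C}$ is literally the $(r,1)$ Erlang loss system, and $\mathscr{UB}$ collapses to (\ref{ERLANG_LOSS}), which is $\gamma$ exactly.
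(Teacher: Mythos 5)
Your proposal is not yet a proof: the entire weight of the theorem rests on the comparison system $\mathscr{C}$, and $\mathscr{C}$ is never actually constructed. You correctly identify $\mathscr{UB}$ as the blocking probability of a birth--death chain with ratio $\pi_{r+1}/\pi_r = x/N$, $N = q\lfloor\frac{r+q}{q}\rfloor$, and the dual of Lemma \ref{LB_LEMMA} (postponing a delivery never increases demand served) is indeed valid --- it is just that lemma read in the other direction. But these two ingredients do not meet. The dual lemma dominates $\mathscr{S}$ only over \emph{delayed copies of the same $(r,q)$ system}: it postpones the delivery epoch of a whole order $\mathscr{O}_k$ of $q$ units and leaves the order-placement logic untouched. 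The system you need --- unit-sized stockout exposures in the high-inventory regime combined with batch recovery at rate $N/\tau$ out of the stockout state --- has a different replenishment structure and is not reachable by composing such postponements; splitting a batch of $q$ into staggered unit deliveries is a modification the lemma does not cover and would require its own coupling argument. The appeal to insensitivity is also unjustified: $M/G/c/c$ insensitivity does not extend to a loss chain with a modified batch exit rate from the top state, and you exhibit no constant-lead-time inventory system whose dynamics realize that chain. You flag this yourself (``the delicate step is to verify that $\mathscr{C}$ so defined is genuinely dominated by $\mathscr{S}$ \dots\ is the crux of the proof''), and that self-assessment is accurate: the crux is missing.

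For contrast, the paper does not construct a more-lossy system at all. It uses Theorem \ref{RELATIONS_THEOREM} to convert $\gamma \le \mathscr{UB}$ into an upper bound (\ref{NECC_SUFF_UB_GAMMA}) on the average inventory level of $\mathscr{S}$ \emph{conditioned on being in stock}, realizes that conditional level as the level of a never-stocking-out Lead Time Reducing system $\overline{\mathscr{S}}$, and then compares inventory levels (not served demand): Lemma \ref{UB_LEMMA} shows a one-time lead-time reduction can only raise the average level, and a limiting sequence of such reductions carries $\overline{\mathscr{S}}$ to the system $\mathscr{T}$, whose level is computed exactly from the $(r,1)$ Erlang system plus the offset $\frac{1}{2}(q-1)$ (Claim \ref{CLAIM_I}). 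That detour through conditional inventory levels is what lets the $(r,1)$ Erlang formula enter with the corrected factor $\frac{r+1}{N}$, and your sketch has no substitute for it.
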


Before proving Theorem \ref{UB_THEOREM} we will mention a class of stochastic inventory systems which serve all customer demand with no lost sales or backorders, and which do not have zero lead time.


\subsection{Lead Time Reducing Systems}
\label{LTR_SYSTEMS}

When deriving the relationship (\ref{RELATE_P_TO_GAMMA}) between inventory position $P$ and lost sales $\gamma$, we modified sample paths of $\mathscr{S}$ by removing from them time spent in the state with no on hand inventory.
Using forgetfulness of Poisson demand, these sample paths can be considered to be sample paths of a modified $(r, q)$ lost sales system $\overline{\mathscr{S}}$ described as follows:

\begin{quote}

  The new system $\overline{\mathscr{S}}$ sees the same customer demand realization, and follows the same (r, q) policy as the original system $\mathscr{S}$.
  $\overline{\mathscr{S}}$ also has the same constant lead time $\tau$ as system $\mathscr{S}$, except in the following situation.

  Whenever $l(t) = 1$ and a customer demand arrives at epoch $t$, system $\overline{\mathscr{S}}$ will serve the demand and immediately transition to state $l = 0$ like the usual $(r, q)$ system does.
  If $l = 0$ is a reorder point, $\overline{\mathscr{S}}$ will, like the usual system, place a new replenishment order for $q$ units.
  There will then be $\left\lfloor \frac{r+q}{q} \right\rfloor $ outstanding replenishment orders
  \\
  $$ ( \mathscr{O}_k, \mathscr{O}_{k+1}, ..., \mathscr{O}_{k + \left\lfloor \frac{r+q}{q} \right\rfloor - 1} )$$
  \\
  with projected delivery epochs
  \\
  $$ ( \rho_k, \rho_{k+1}, ..., \rho_{k + \left\lfloor \frac{r+q}{q} \right\rfloor - 1} ) $$
  \\
  The new behavior is: System $\overline{\mathscr{S}}$ will then immediately (upon transitioning to a state with no on hand inventory) reduce the remaining lead times on all oustanding orders by the same amount $\Delta t = \rho_k - t$, where $\rho_k > t$ is the projected delivery epoch of the outstanding order due to arrive next.
  The projected delivery epochs have been changed, and are now
  \\
  $$ ( t, \rho_{k+1} - \Delta t, ..., \rho_{k + \left\lfloor \frac{r+q}{q} \right\rfloor - 1} - \Delta t )$$
  \\
  $\mathscr{O}_k$ is then immediately delivered, and the inventory level increases to $l(t) = q$.
  The system thus spends zero time in the state with no on hand inventory, and serves every customer demand immediately.

\end{quote}

Systems with this behavior will be referred to as \emph{Lead Time Reducing (LTR)} systems.


\subsection{Proof of $\mathscr{UB}$}

The desired bound on $\gamma$ will be recovered from a bound on the average inventory level of a related \emph{LTR} system.
We will construct a sequence of inventory systems whose average inventory levels can be compared, and which in the limit gives the bound in the theorem.

Consider a pair $\mathscr{S}_1, \mathscr{S}_2$ of two \emph{LTR} systems.
Let $\mathscr{O}_{1,k}$ and $\mathscr{O}_{2,k}$ denote the $k$th replenshment orders for $\mathscr{S}_1$ and $\mathscr{S}_2$.
Let $\rho_{1,k}$ and $\rho_{2,k}$ denote projected arrival epochs for $\mathscr{O}_{1,k}$ and $\mathscr{O}_{2,k}$, which are subject to change as the systems evolve (due to the \emph{lead time reducing} behavior of the systems).
When the epoch at which $\rho_{*,k}$ is measured needs to be made explicit we may write $\rho_{*,k}(t)$.
The actual delivery epochs of $\mathscr{O}_{1,k}$ and $\mathscr{O}_{2,k}$ will be indicated by $\omega _{1,k}$ and $\omega _{2,k}$, and are not determined until the epochs at which delivery occurs.
Subscripts $*$, $x$ and $y$ will be used to indicate one or the other of $\mathscr{S}_1$ or $\mathscr{S}_2$, for example $\mathscr{O}_{*,k}$, $\omega _{y,k}$, $\rho _{x,k}$.

\begin{mylemma} \label{UB_LEMMA}
Let $\mathscr{S}_1, \mathscr{S}_2$ be two identical copies of the same $(r, q)$ \emph{LTR} system, which start in the same (unspecified) initial state, and see the same realization of the customer demand process.
Suppose $\mathscr{S}_1$ and $\mathscr{S}_2$ are in an identical state at epoch $t_0$, and at that epoch the systems have (the same) non-zero number of outstanding orders $\mathscr{O}_{*,0}, \mathscr{O}_{*,1}, \cdots, \mathscr{O}_{*,j}$.
\emph{One time only}, reduce the lead times for all outstanding orders for $\mathscr{S}_2$ by the same amount $d_0$, with $0 < d_0 \le \rho_{*,0} - t_0$, while leaving the lead times for system $\mathscr{S}_1$ unchanged.
This is allowed to occur even if $l(\mathscr{S}_2, t_0) > 1$.
Thereafter, both systems follow their usual \emph{LTR} policy.

Then, the average inventory level for $\mathscr{S}_2$ over any interval $[t_0, t]$ is at least as great as the average inventory level for $\mathscr{S}_1$ over that interval.

In other words
$$
L(\mathscr{S}_1, [t_0, t]) \le L(\mathscr{S}_2, [t_0, t])
$$
\end{mylemma}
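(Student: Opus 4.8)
The plan is to reduce the average-inventory inequality to a \emph{pointwise} comparison of cumulative deliveries, and then to establish that comparison by tracking projected delivery epochs as the two systems evolve. The starting observation is the defining feature of \emph{LTR} systems: they never lose a sale and spend zero time empty, so $\mathscr{S}_1$ and $\mathscr{S}_2$ serve exactly the same demand on every interval. Writing $a_i(t_0,t]$ for the number of replenishment units delivered to $\mathscr{S}_i$ on $(t_0,t]$ and $D(t)$ for the common served demand, we have $l(\mathscr{S}_i,t)=l(\mathscr{S}_i,t_0)+a_i(t_0,t]-D(t)$. Since the systems agree at $t_0$, the difference is $l(\mathscr{S}_2,t)-l(\mathscr{S}_1,t)=a_2(t_0,t]-a_1(t_0,t]$, so it suffices to show $a_2(t_0,t]\ge a_1(t_0,t]$ for all $t$; integrating this pointwise over $[t_0,t]$ gives $L(\mathscr{S}_1,[t_0,t])\le L(\mathscr{S}_2,[t_0,t])$, which is in fact stronger than the stated average inequality. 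Because inventory \emph{position} changes only on demands and order placements (a delivery leaves it unchanged), both systems place their $k$th order at the same epoch and deliver orders FIFO; hence $a_2\ge a_1$ is equivalent to showing that each order is delivered no later by $\mathscr{S}_2$ than by $\mathscr{S}_1$, i.e. $\omega_{2,k}\le\omega_{1,k}$. Note this framework accommodates the allowed case $l(\mathscr{S}_2,t_0)>1$, since it uses only $l(\mathscr{S}_1,t_0)=l(\mathscr{S}_2,t_0)$ and the common demand.

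Next I would introduce the exogenous epoch $d_n$ at which the common cumulative demand since $t_0$ first reaches $l(\mathscr{S}_i,t_0)+(n-1)q$. Nonnegativity of on-hand inventory forces the $n$th delivery no later than $d_n$ in either system, and the \emph{LTR} rule delivers the $n$th order at exactly $\min(\rho_{*,n},d_n)$, where $\rho_{*,n}$ is its current projected delivery epoch. I would then argue by first crossing: let $t^*$ be the earliest epoch with $a_2<a_1$. Just before $t^*$ both systems have delivered the same orders $1,\dots,N$ and agree in on-hand inventory, and at $t^*$ system $\mathscr{S}_1$ delivers order $N+1$. If that delivery is demand-forced then $t^*=d_{N+1}$, at which instant $\mathscr{S}_2$ likewise has on-hand inventory $0$ and is forced to deliver order $N+1$ as well, contradicting $a_2(t^*)<a_1(t^*)$. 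The remaining case, where $\mathscr{S}_1$ delivers order $N+1$ at its projection, is closed by the auxiliary invariant that for every order outstanding in both systems $\mathscr{S}_2$'s projected delivery epoch is at most $\mathscr{S}_1$'s (so $\rho_{2,N+1}\le\rho_{1,N+1}=t^*$ forces $\mathscr{S}_2$ to have delivered order $N+1$ by $t^*$), an invariant seeded at $t_0$ by the uniform one-time reduction $d_0$.

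The hard part will be maintaining this projected-delivery domination across \emph{asymmetric} demand-pull events, in which $\mathscr{S}_1$ reaches an empty state and pulls in an order (thereby shortening the projected lead times of all its other outstanding orders) while $\mathscr{S}_2$, being ahead, does not and so its projections are left fixed. For the \emph{synchronous} events this is manageable: when both systems empty simultaneously their delivered sets coincide, and a synchronous pull of order $k$ reduces the gap $\rho_{1,m}-\rho_{2,m}$ of every still-outstanding order $m$ by the gap of order $k$, which I expect to control by carrying the stronger invariant that these gaps are nonnegative and nondecreasing in $m$ (preserved here because the reductions respect the FIFO ordering). The asymmetric pull is the genuine obstacle, since there $\mathscr{S}_1$'s reduction shrinks the gaps of the commonly-outstanding orders with no matching move in $\mathscr{S}_2$, so one must show the accumulated gap is always large enough to absorb it. I anticipate that the slack needed comes from the fact that the order $\mathscr{S}_1$ is now pulling was itself delivered earlier in $\mathscr{S}_2$ (at or before the same exogenous epoch $d_{k}$), and making that accounting rigorous — verifying the refined invariant survives every event type — is the main technical burden of the proof; once it is in place the $\min(\rho_{*,n},d_n)$ characterization yields $\omega_{2,n}\le\omega_{1,n}$ for all $n$, and the lemma follows.
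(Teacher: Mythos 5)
Your opening reduction is sound as far as it goes: both \emph{LTR} systems serve every unit of demand, so the difference in inventory levels equals the difference in cumulative deliveries, and the lemma would follow from $\int_{t_0}^{t}\bigl(a_2(x)-a_1(x)\bigr)\,dx\ge 0$. The gap is that you then replace this \emph{integrated} condition by the \emph{pointwise} condition $a_2(x)\ge a_1(x)$ for all $x$, equivalently $\omega_{2,k}\le\omega_{1,k}$ for every $k$, and that stronger statement is false. The failure is exactly the ``asymmetric demand-pull'' you flag as the main technical burden, and no accounting of accumulated slack can absorb it: when $\mathscr{S}_1$ empties and pulls in order $k$, the \emph{LTR} rule shortens \emph{every} later outstanding order by the full amount $\rho_{1,k}-t$, while $\mathscr{S}_2$ (which received order $k$ earlier and therefore still has stock) applies no reduction at all, or a strictly smaller one, so $\mathscr{S}_1$ can overtake on order $k+1$. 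Concretely: take $r=3$, $q=2$, $\tau=20$, both systems at $l=2$ at $t_0$ with one outstanding order due at $t_0+10$; let $d_0=10$, so $\mathscr{S}_2$ receives $\mathscr{O}_0$ at $t_0$; let demands arrive at $t_0+1$ (both systems place $\mathscr{O}_1$, due $t_0+21$) and at $t_0+2$, then none for a long while. At $t_0+2$ system $\mathscr{S}_1$ hits $l=0$, pulls in $\mathscr{O}_0$, and shortens $\mathscr{O}_1$ to $t_0+13$, while $\mathscr{S}_2$ sits at $l=2$ and leaves $\mathscr{O}_1$ at $t_0+21$. Hence $\omega_{1,1}<\omega_{2,1}$ and $a_1>a_2$ throughout $[t_0+13,\,t_0+21)$, so your invariant that $\mathscr{S}_2$'s projected delivery epochs dominate cannot be maintained. (This example is worth examining further: the exogenous reduction at $k=0$ does not behave like a customer-triggered one, which also stresses the hypotheses under which the lemma is applied and the paper's own Claims 2--3.)

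This is precisely why the paper does not argue pointwise. It keeps the integral, writing $R(t)=\int_{t_0}^{t}(a_2-a_1)=\sum_{\omega_{2,k}<t}(t-\omega_{2,k})-\sum_{\omega_{1,k}<t}(t-\omega_{1,k})$, and establishes only that the cumulative sums $Q(n)=\sum_{k=0}^{n}(\omega_{1,k}-\omega_{2,k})$ are nonnegative. It does so by expressing each $\omega_{1,k}-\omega_{2,k}$ as a moving-window sum of per-order reduction differences $d_j$ and proving that the partial sums $S(m)=\sum_{j\le m}d_j$ stay nonnegative (Claims 3--6); individual terms $\omega_{1,k}-\omega_{2,k}$ are explicitly permitted to be negative (Claim 4 treats both signs of $d_k$). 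The entire $Q$/$S$ machinery exists because only the sums, not the individual delivery times, can be controlled. Even on its own terms your write-up defers the one nontrivial step (``verifying the refined invariant survives every event type''), so it is not yet a proof; repairing it requires abandoning termwise delivery domination in favor of cumulative delivery-time differences, which lands you back on the paper's route.
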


\begin{proof}[Proof of Lemma \ref{UB_LEMMA}]

A lead time reduction for system $\mathscr{S}_*$ forces order $\mathscr{O}_{*,k}$ to be delivered if it occurs at an epoch when $\mathscr{O}_{*,k}$ is outstanding and is the outstanding order due to arrive next for system $\mathscr{S}_*$.
More precisely, define the left-hand limit
\begin{equation} \label{DEF_RHO_BAR}
  \overline{\rho}_{*,k} = \lim_{t \to {\omega _{*,k}}^- } \rho_{*,k}(t)
\end{equation}
Then $ \overline{\rho}_{*,k} - \omega _{*,k} $ measures the lead time reduction (if any) applied to all outstanding replenishment orders of $\mathscr{S}_*$, in the case a customer arrives when the system has exactly one unit of inventory on hand, and $\mathscr{O}_{*,k}$ is outstanding and next to be delivered.
In other words, $ \overline{\rho}_{*,k} - \omega _{*,k} > 0 $ if a lead time reduction forces the delivery of $\mathscr{O}_{*,k}$, and $ \overline{\rho}_{*,k} - \omega _{*,k} = 0 $ otherwise.

\begin{myclaim} \label{CLAIM_A}
If $ \overline{\rho}_{x,k} - \omega _{x,k} > 0 $ then $\omega _{y,k} \le \omega _{x,k}$.
Thus, a lead time reduction for system $\mathscr{S}_x$ which causes the delivery of $\mathscr{O}_{x,k}$ never results in a situation where $\mathscr{O}_{y,k}$ is left outstanding.
\end{myclaim}

\begin{proof}[Proof of Claim \ref{CLAIM_A}]
Left $\delta > 0$ be small, so that no customer demands or replenishment deliveries occur in the interval $(\omega _{x,k} - \delta, \omega _{x,k})$.
In this interval $l(\mathscr{S}_x, t) = 1$,  with $\mathscr{O}_{x,k}$ outstanding and next to be delivered for system $\mathscr{S}_x$.
Recall that both systems started in the same state, see the same customer demand realization, and serve every unit of demand as it occurs.
Orders $\mathscr{O}_{x,k}$ and $\mathscr{O}_{y,k}$ were placed simultaneously.
If $\mathscr{O}_{y,k}$ is still outstanding in the interval $(\omega _{x,k} - \delta, \omega _{x,k})$, then it is also true that $l(\mathscr{S}_y, t) = 1$ in the interval, and $\mathscr{O}_{y,k}$ is next to be delivered for $\mathscr{S}_y$ (otherwise $l(\mathscr{S}_y, t)  \le 0$).
In this case the customer demand which causes the lead time reduction for $\mathscr{S}_x$ at epoch $\omega _{x,k}$ will also cause a lead time reduction for $\mathscr{S}_y$, and so $\mathscr{O}_{y,k}$ will be delivered at epoch $\omega _{y,k} = \omega _{x,k}$.
\end{proof}

\begin{myclaim} \label{CLAIM_B}
Let $m_*(k)$ be the minimum index $j$ such that order $\mathscr{O}_{*,j}$ was outstanding immediately after $\mathscr{O}_{*,k}$ was placed.
In case $l = 0$ is a reorder point and $\mathscr{O}_{*,k}$ was placed upon hitting $l = 0$, then calculate $m_*(k)$ after $\mathscr{O}_{*,k}$ was placed, but before the lead time reduction was applied. If $m_x(k) < m_y(k)$, then $ \overline{\rho}_{y,j} - \omega _{y,j} = 0 $ for $j = m_x(k), m_x(k) + 1, \cdots , m_y(k) - 1 $.
\end{myclaim}

\emph{Remark:} It's clear that $m_*(k) = k - n$ if the reorder point hit is $r - nq$, for $n = 0, 1, \cdots \left\lfloor \frac{r+q}{q} \right\rfloor - 1$.

\begin{proof}[Proof of Claim \ref{CLAIM_B}]
Let $t$ be the epoch at which orders $\mathscr{O}_{x,k}$ and $\mathscr{O}_{y,k}$ were simultaneously placed.
Then orders $\mathscr{O}_{y,j}$, for $j = m_x(k), m_x(k) + 1, \cdots , m_y(k) - 1 $, were delivered no later than epoch $t$.
If $ \overline{\rho}_{y,j} - \omega _{y,j} > 0 $, then by Claim \ref{CLAIM_A} $\omega _{x,k} \le \omega _{y,k}$, and so the corresponding orders $\mathscr{O}_{x,j}$ would also have been delivered no later than epoch $t$, and that is a contradiction.
\end{proof}

Now, let $t_k$ be the epoch at which order $\mathscr{O}_{*,k}$ was placed.
At epoch $t_k$, $\mathscr{O}_{*,k}$ has the projected delivery epoch $\rho_{*,k}(t_k) = t_k + \tau$.
The projected delivery epoch $\rho_{*,k}$ may decrease, however, as lead time reductions for orders ${ \mathscr{O}_{*,j} }$, ${ j = m_*(k), m_*(k) + 1, \cdots , k - 1 }$, affect the remaining lead time on order $\mathscr{O}_{*,k}$.
We then see that
\begin{align}
  t_k + \tau - \overline{\rho}_{*,k} &= (\overline{\rho}_{*,m_*(k)} - \omega _{*,m_*(k)})            \nonumber \\
                                     & \qquad\quad + (\overline{\rho}_{*,m_*(k) + 1} - \omega _{*,m_*(k) + 1})   \nonumber \\
                                     & \qquad\quad + \cdots                                                      \nonumber \\
                                     & \qquad\quad + (\overline{\rho}_{*, k - 1} - \omega _{*, k - 1})
  \label{TOTAL_LTR_FOR_ORDER}
\end{align}
This measures the total time deducted from the lead time for order $\mathscr{O}_{*,k}$.
We can then express $\overline{\rho}_{1,k} - \overline{\rho}_{2,k}$ as
\begin{align}
  & \overline{\rho}_{1,k} - \overline{\rho}_{2,k} \nonumber \\
  & \quad = \sum_{j=m_2(k)}^{k-1}(\overline{\rho}_{2,j} - \omega_{2,j})
              - \sum_{j=m_1(k)}^{k-1}(\overline{\rho}_{1,j} - \omega_{1,j}) \nonumber \\
  & \quad = \sum_{j=m(k)}^{k-1}(\overline{\rho}_{2,j} - \omega_{2,j})
              - \sum_{j=m(k)}^{k-1}(\overline{\rho}_{1,j} - \omega_{1,j}) 
  \label{DIFF_BETWEEN_RHO_LIMITS}
\end{align}
where $m(k) = \min(m_1(k), m_2(k))$.
The second equality follows from Claim \ref{CLAIM_B}.

In the statement of Lemma \ref{UB_LEMMA}, $d_0$ was defined as some value in the range \\
$0< d_0 \le \rho_{2,0} - t_0$.
We will define $d_k$, for $k > 0$, as
\begin{align}
  d_k = \begin{cases}
           0                    & k < 0 \\
           d_0                  & k = 0 \\
           (\overline{\rho}_{2, k} - \omega _{2, k}) - (\overline{\rho}_{1, k} - \omega _{1, k}) & k > 0
         \end{cases}
  \label{DEF_OF_d_k}
\end{align}

We have arrived at the following result.

\begin{myclaim} \label{CLAIM_C}
We have
\begin{align}
  \text{A)} && \overline{\rho}_{1,k} - \overline{\rho}_{2,k} &= \sum _{j = m(k)} ^ {k-1} d_j \nonumber \\
  \text{B)} && \omega_{1,k} - \omega_{2,k} &= \sum _{j = m(k)} ^ {k} d_j \nonumber
\end{align}
\end{myclaim}

\begin{proof}[Proof of Claim \ref{CLAIM_C}]
\emph{A)} is immediate from (\ref{DIFF_BETWEEN_RHO_LIMITS}).  \emph{B)} follows from \emph{A)} and (\ref{DEF_OF_d_k}).
\end{proof}

\begin{myclaim} \label{CLAIM_D}
For each $d_k$, at least one of
$$0 \le d_k \le \overline{\rho}_{2,k} - \overline{\rho}_{1,k} \text{ \ \ or \ \ }
0 \ge d_k \ge \overline{\rho}_{2,k} - \overline{\rho}_{1,k}$$
is true.
\end{myclaim}

\begin{proof}[Proof of Claim \ref{CLAIM_D}]
If $\omega_{2,k} = \omega_{1,k}$, then $d_k = \overline{\rho}_{2, k} - \overline{\rho}_{1, k}$ and the conclusion holds. Otherwise, suppose $\omega_{1,k} < \omega_{2,k}$. Then $\overline{\rho}_{1, k} - \omega_{1,k} = 0$ by Claim \ref{CLAIM_A}.
Therefore $d_k = \overline{\rho}_{2, k} - \omega_{2,k} \ge 0$.
Since $\omega_{1,k} < \omega_{2,k}$ we have
$\overline{\rho}_{1, k} < \omega_{2,k}$.
It follows that
$$0 \le d_k = \overline{\rho}_{2, k} - \omega_{2,k} < \overline{\rho}_{2, k} - \overline{\rho}_{1, k}$$
The situation when $\omega_{2,k} < \omega_{1,k}$ is similar and yields the other possibility.
\end{proof}

\begin{myclaim} \label{CLAIM_E}
If $k + 1 - \left\lfloor \frac{r+q}{q} \right\rfloor \le j < m(k)$, then $d_j = 0$. Therefore
\begin{align}
  \text{A)} && \overline{\rho}_{1,k} - \overline{\rho}_{2,k} &= \sum _{j = k + 1 - \left\lfloor \frac{r+q}{q} \right\rfloor}  ^ {k-1} d_j \nonumber \\
  \text{B)} && \omega_{1,k} - \omega_{2,k} &= \sum _{j = k + 1 - \left\lfloor \frac{r+q}{q} \right\rfloor} ^ {k} d_j \nonumber
\end{align}
\end{myclaim}

\begin{proof}[Proof of Claim \ref{CLAIM_E}]
Suppose not. Then for at least one system $\mathscr{S}_*$, we see that $\overline{\rho}_{*, j} - \omega_{*,j}$ is non-zero for some index $j$ in
$$
\textstyle \left\{ k + 1 - \left\lfloor \frac{r+q}{q} \right\rfloor, k + 2 - \left\lfloor \frac{r+q}{q} \right\rfloor, \cdots, m(k) - 1 \right\}
$$
This means the system $\mathscr{S}_*$ experienced a lead time reduction at some epoch $t$, forcing the deliver of order $\mathscr{O}_{*,j}$.
Therefore, a customer demand arrived at epoch $t$ when $l(\mathscr{S}_*, t) = 1$.
The system immediately transitioned through $l(\mathscr{S}_*, t) = 0$, at which point there were $\left\lfloor \frac{r+q}{q} \right\rfloor$ outstanding orders.
This implies that order $\mathscr{O}_{*,k}$ was outstanding immediately before order $\mathscr{O}_{*,j}$ was delivered. This is a contradiction, because $j < m(k) \le m_*(k)$.
\end{proof}

If orders for system $\mathscr{S}_2$ tend to be delivered prior to those for system $\mathscr{S}_1$, we should expect the average inventory level for $\mathscr{S}_2$ to be at least as high as that for $\mathscr{S}_1$. To this end let us now define
\begin{equation}
Q(n) = \sum_{k = 0}^n (\omega_{1,k} - \omega_{2,k}) = \sum_{k = 0}^n \left( \sum _{j = k + 1 - \left\lfloor \frac{r+q}{q} \right\rfloor} ^{k} d_j \right)
\label{DEF_1_OF_Q}
\end{equation}
By re-indexing, we can write (\ref{DEF_1_OF_Q}) as
\begin{equation}
Q(n) = \sum _{m = n + 1 - \left\lfloor \frac{r+q}{q} \right\rfloor} ^{n} S(m)
\label{DEF_2_OF_Q}
\end{equation}
where $S(m) = \sum _{k=0}^{m} d_k$.

\begin{myclaim} \label{CLAIM_F}
$S(m) \ge 0$ for all $m \ge 0$, and therefore $Q(n) \ge 0$ for all $n \ge 0$.
\end{myclaim}

\begin{proof}[Proof of Claim \ref{CLAIM_F}]
Suppose not, and let $N$ be the least positive integer such that $S(N) < 0$.
We have $d_N = S(N) - S(N - 1) < 0$, and it follows from Claim \ref{CLAIM_D}
that $\overline{\rho}_{2,N} - \overline{\rho}_{1,N} \le d_N < 0$.
By Claim \ref{CLAIM_C} A) and Claim \ref{CLAIM_E}, we have
$$
- \sum_{j = N + 1 - \left\lfloor \frac{r+q}{q} \right\rfloor} ^ {N-1} d_j \le d_N < 0
$$
Therefore
\begin{align}
S(N - \textstyle \left\lfloor \frac{r+q}{q} \right\rfloor)
  &=   S(N-1) - \sum_{j = N + 1 - \left\lfloor \frac{r+q}{q} \right\rfloor} ^{N-1} d_j \nonumber \\
  &\le S(N -1) + d_N = S(N) \nonumber \\
  &< 0 \nonumber
\end{align}
This is a contradiction, because $N - \left\lfloor \frac{r+q}{q} \right\rfloor < N$.
\end{proof}

The work involved in proving Lemma \ref{UB_LEMMA} is mostly complete, and the remainder is just details.

Let $a_*(t)$ denote the number of replenishment orders for system $\mathscr{S}_*$ which were delivered in the interval $[t_0, t]$, and let $s_*(t)$ denote the number of units of demand served by that system in the same interval. Then
$$
l(\mathscr{S}_*, t) = l(\mathscr{S}_*, t_0) + q a_*(t) - s_*(t)
$$
The two systems $\mathscr{S}_1$ and $\mathscr{S}_2$ were in the same state at epoch $t_0$, and serve customer demands at the same epochs. The difference between the average inventory levels of $\mathscr{S}_1$ and $\mathscr{S}_2$ on the interval $[t_0, t]$ is then given by
\begin{align}
& \frac{1}{t-t_0} \int_{t_0}^{t} \Big( l(\mathscr{S}_2, x) - l(\mathscr{S}_1, x) \Big) \text{\ } dx \nonumber \\
& \qquad \quad = \frac{q}{t-t_0} \int_{t_0}^{t} \Big( a_2(x) - a_1(x) \Big) \text{\ } dx
\label{AVG_LEVEL_CONDITION}
\end{align}
The quantity (\ref{AVG_LEVEL_CONDITION}) will be non-negative if
$$
R(t) = \int_{t_0}^{t} \Big( a_2(x) - a_1(x) \Big) \text{\ } dx \ge 0
$$
\\
When $x < t$, $a_*(x)$ can be expressed as a step function viz:
$$
a_*(x) = \sum_{\omega_{*,k} < t} \mathscr{H}(x - \omega_{*,k})
$$
where $\mathscr{H}(x)$ is the unit step function with $\mathscr{H}(x) = 0$ when $x < 0$ and $\mathscr{H}(x) = 1$ when $x \ge 0$.
Employing this we obtain
\begin{align}
\int_{t_0}^{t} a_*(x) \text{\ } dx
    &= \int_{t_0}^{t} \sum_{\omega_{*,k} < t} \mathscr{H}(x - \omega_{*,k}) \text{\ } dx \nonumber \\
    &= \sum_{\omega_{*,k} < t} \int_{t_0}^{t} \mathscr{H}(x - \omega_{*,k}) \text{\ } dx \nonumber \\
    &= \sum_{\omega_{*,k} < t} (t - \omega_{*,k} ) \nonumber
\end{align}
\\
Hence
\begin{align}
R(t) = \sum_{\omega_{2,k} < t} (t - \omega_{2,k} ) - \sum_{\omega_{1,k} < t} (t - \omega_{1,k} )
\label{R_FORMULA}
\end{align}
\\
Let $M = \max\{k : \omega_{1,k} < t \text{ and } \omega_{1,k} < t \}$.
Then at least one of $\omega_{1,M+1}$ or $\omega_{2,M+1}$ is at least $t$.
If $\omega_{*,M+1} \ge t$, then $\omega_{*,j} \ge t$ when $j \ge M+1$, because orders do not cross in \emph{LTR} systems.
$R(t)$ must then be one of the following forms:
\\
$$
R(t) =
\begin{cases}

  & \sum \limits _{k=0}^{M} (\omega_{1,k} - \omega_{2,k})                                \\

  &        \qquad\qquad \text{ if } \omega_{1,M+1} \ge t \text{ , } \omega_{2,M+1} \ge t          \\[1.8em] 

  & \sum \limits _{k=0}^{M} (\omega_{1,k} - \omega_{2,k})
      + \sum \limits _{ \substack{ M < k \\ \omega_{2,k} < t } } (t - \omega_{2,k})     \\

  &        \qquad\qquad  \text{ if } \omega_{2,M+1} < t \text{ , } \omega_{1,M+1} \ge t            \\[1.8em] 

  & \sum \limits _{k=0}^{M} (\omega_{1,k} - \omega_{2,k})
      - \sum \limits _{ \substack{ M < k \\ \omega_{1,k} < t} } (t - \omega_{1,k})      \\

  &        \qquad\qquad \text{ if } \omega_{1,M+1} < t \text{ , } \omega_{2,M+1} \ge t            \\
\end{cases}
$$
\\
In case $\omega_{1,M+1} \ge t$ and $\omega_{2,M+1} \ge t$, Claim \ref{CLAIM_F} guarantees that $R(t) \ge 0$.

When $\omega_{2,M+1} < t$, we see that the each of the terms $(t - \omega_{2,k})$
with $M < k$ and $\omega_{2,k} < t$ is non-negative. Using this and Claim \ref{CLAIM_F} we see again that $R(t) \ge 0$ in this case.

Finally, suppose $\omega_{1,M+1} < t$. Then $\omega_{2,k} \ge t$ for each of the terms \\
$(t - \omega_{1,k})$ in the right sum with $M < k$.
Then
$$
- \sum \limits _{ \substack{ M < k \\ \omega_{1,k} < t} } (t - \omega_{1,k})
  \ge - \sum \limits _{ \substack{ M < k \\ \omega_{1,k} < t} } (\omega_{2,k} - \omega_{1,k})
$$
Hence
\begin{align}
R(t) & \ge  Q(M) - \sum \limits _{ \substack{ M < k \\ \omega_{1,k} < t} } (\omega_{2,k} - \omega_{1,k}) \nonumber \\
     & \ge  Q(M) + \sum \limits _{ \substack{ M < k \\ \omega_{1,k} < t} } (\omega_{1,k} - \omega_{2,k}) \nonumber \\
     & =  Q(N)  \nonumber \\
     & \ge  0      \nonumber
\end{align}
for some $N$.
This completes the proof of Lemma \ref{UB_LEMMA}.
\end{proof}


Now that we have Lemma \ref{UB_LEMMA} available, we may continue with the proof of Theorem \ref{UB_THEOREM}.
Let $\mathscr{S}$ be the main $(r, q)$ system under consideration in this paper.
From Theorem \ref{RELATIONS_THEOREM} we know that the long run average inventory level is
$$
L(\mathscr{S}) = (1 - \gamma) \Big( r + \frac{1}{2}(q+1) - x \Big) + \gamma q \left\lfloor \frac{r+q}{q} \right\rfloor
$$
It is straightforward to see that the long run average inventory level $\overline{L}(\mathscr{S})$ of $\mathscr{S}$, conditioned on $l(\mathscr{S}, t) > 0$, is given by
\\
\begin{equation} \label{CONDITIONED_AVG_LEVEL_S}
\overline{L}(\mathscr{S})
  = \frac{L(\mathscr{S})}{1-\gamma}
  = r + \frac{1}{2}(q+1) - x + \frac{\gamma}{1 - \gamma} q \left\lfloor \frac{r+q}{q} \right\rfloor
\end{equation}
\\
Now $\gamma \le \mathscr{UB}$ exactly when $\frac{\gamma}{1-\gamma} \le \frac{\mathscr{UB}}{1-\mathscr{UB}}$.
Recalling the definition
$$
\mathscr{UB}
  = \frac{ \displaystyle \left( \frac{r + 1}{q \left\lfloor \frac{r+q}{q} \right\rfloor} \right) \frac{x^{r+1}}{(r+1)!} }
    { \displaystyle \left( \frac{r + 1}{q \left\lfloor \frac{r+q}{q} \right\rfloor} \right) \frac{x^{r+1}}{(r+1)!} + \displaystyle \sum_{k = 0}^{r} \frac{x^{k}}{k!} }
$$
we see that $\frac{\gamma}{1-\gamma} \le \frac{\mathscr{UB}}{1-\mathscr{UB}}$ will be true when
$$
\frac{\gamma}{1-\gamma}
  \le
 \frac{ \displaystyle \left( \frac{r + 1}{q \left\lfloor \frac{r+q}{q} \right\rfloor} \right) \frac{x^{r+1}}{(r+1)!} }
    { \displaystyle \sum_{k = 0}^{r} \frac{x^{k}}{k!} }
$$
\\
Using (\ref{CONDITIONED_AVG_LEVEL_S}), the following is then a sufficient condition for $\gamma \le \mathscr{UB}$:
\begin{align*}
  \textstyle
  \overline{L}(\mathscr{S})
  & \le
    r + \frac{1}{2}(q+1) - x \\
  & \qquad + \left\{
                 \frac{ \textstyle \left( \frac{r + 1}{q \left\lfloor \frac{r+q}{q} \right\rfloor} \right) \frac{x^{r+1}}{(r+1)!} }
                    { \textstyle \sum_{k = 0}^{r} \frac{x^{k}}{k!} }
              \right\}
              q \left\lfloor \frac{r+q}{q} \right\rfloor
\end{align*}
which can be re-written as
\\
\begin{equation} \label{NECC_SUFF_UB_GAMMA}
\overline{L}(\mathscr{S})
  \le
  (r + 1 - x) + (r+1)
    \left\{
      \frac{ \displaystyle \frac{ x^{r+1}}{(r+1)!} } { { \displaystyle \sum_{k = 0}^{r} \frac{x^{k}}{k!} } }
    \right\}
  + \frac{1}{2}(q - 1)
\end{equation}
\\
We will now construct a system whose average inventory level is given by the right-hand side of (\ref{NECC_SUFF_UB_GAMMA}).

Let $\mathscr{U}$ be a \emph{LTR} system with parameters $(r, q = 1, \lambda, \tau)$.
Let $l(\mathscr{U}, t)$ denote the inventory level of $\mathscr{U}$ at epoch $t$, and let $l(\mathscr{U}, t_0) = r + 1$.
We emphasize that system $\mathscr{U}$ places a replenishment order for each unit of demand served.
Now let the reorder quantity $q$ be arbitrary, and consider a $(r, q, \lambda, \tau)$ system $\mathscr{T}$ defined as follows:

\begin{quote}
  Let $l(\mathscr{T}, t_0) = r + 1$, and let $\mathscr{T}$ see the same customer demand realiztion as $\mathscr{U}$.
  If at an epoch $t$ lead times on all outstanding orders for $\mathscr{U}$ are reduced by $\Delta t$, then lead times on all outstanding orders for $\mathscr{T}$ will be reduced by the same amount $\Delta t$. System $\mathscr{T}$ will follow this policy even when $l(\mathscr{T}, t) > 1$. Immediately after lead times are reduced, all orders whose remaining lead times are non-positive will be immediately delivered.
\end{quote}

\emph{Remark:} System $\mathscr{T}$ is not a \emph{LTR} system as defined previously, but is rather a system which experiences lead time reductions which are \emph{driven by} those for system $\mathscr{U}$. In particular, lead time reductions applied to system $\mathscr{T}$ do not necessarily force the delivery of a replishment order for system $\mathscr{T}$.

Let $\mathscr{O}_{n}(\mathscr{U})$ and $\mathscr{O}_{m}(\mathscr{T})$ denote replenishment orders for systems $\mathscr{U}$ and $\mathscr{T}$. Orders $\mathscr{O}_{1}(\mathscr{U})$ and $\mathscr{O}_{1}(\mathscr{T})$ are placed at the same epoch upon both systems simultaneously hitting the reorder point at inventory level $r$ for the first time.
Let $\rho_{n}(\mathscr{U})$ and $\rho_{m}(\mathscr{T})$ denote projected delivery epochs, subject to change due to lead time reductions.
Let $\omega_{n}(\mathscr{U})$ and $\omega_{m}(\mathscr{T})$ denote actual delivery epochs.

\begin{myclaim} \label{CLAIM_G}
Suppose on the interval $[t_0, t_*]$ that $l(\mathscr{U}, t) \le l(\mathscr{T}, t)$ \\
whenever $\nobreak{t \in [t_0, t_*]}$. Then all of the following are true:

\begin{enumerate}

\item[a)] $\mathscr{T}$, like $\mathscr{U}$, serves all customer demand which occurrs in $[t_0, t_*]$ with no lost sales.

\item[b)] If $\mathscr{O}_{kq + 1}(\mathscr{U})$ is placed in the interval $[t_0, t_*]$, then $\mathscr{O}_{kq + 1}(\mathscr{U})$ and $\mathscr{O}_{k + 1}(\mathscr{T})$ are placed at the same epoch.

\item[c)] Any lead time reduction in $[t_0, t_*]$ which reduces $\rho_{kq + 1}(\mathscr{U})$ by $\Delta t$ also reduces $\rho_{k + 1}(\mathscr{T})$ by $\Delta t$.

\item[d)] If $\omega_{kq + 1}(\mathscr{U}) \in [t_0, t_*]$, then $\omega_{kq + 1}(\mathscr{U}) = \omega_{k + 1}(\mathscr{T})$.
Therefore $\mathscr{O}_{kq + 1}(\mathscr{U})$ and $\mathscr{O}_{k + 1}(\mathscr{T})$ are delivered simultaneously.

\item[e)] $0 \le l(\mathscr{T}, t) - l(\mathscr{U}, t) \le q - 1$ for $t \in [t_0, t_*]$.
Furthermore, let $N(t)$ be the largest integer $n$ such that $\omega_{n}(\mathscr{U}) \le t$.
Then $l(\mathscr{T}, t) - l(\mathscr{U}, t) = d$, with $0 \le d \le q - 1$, if and only if $N(t) = kq + (q - d)$.

\end{enumerate}

\end{myclaim}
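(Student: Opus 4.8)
The plan is to establish the five assertions in the order (a), (b), (c), (d), (e), where (b)--(d) are carried by a single induction on the batch index $k$ that couples each order $\mathscr{O}_{kq+1}(\mathscr{U})$ to its partner $\mathscr{O}_{k+1}(\mathscr{T})$ in \emph{lockstep}, and (e) is then reduced to a bookkeeping identity. Assertion (a) is immediate from the hypothesis: since $\mathscr{U}$ is an \emph{LTR} system it serves every demand, so immediately before each demand arrival in $[t_0,t_*]$ we have $l(\mathscr{U},t)\ge 1$; the hypothesis $l(\mathscr{U},t)\le l(\mathscr{T},t)$ then forces $l(\mathscr{T},t)\ge 1$, so $\mathscr{T}$ serves that demand and incurs no lost sales.

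Granted (a), both systems serve exactly the same demands, so after $m$ arrivals each has served $m$ units. Because $\mathscr{U}$ places one order per unit served while $\mathscr{T}$ places one order of size $q$ per $q$ units served, the inventory position of $\mathscr{T}$ reaches the reorder point $r$ precisely at arrivals $1, q+1, 2q+1,\dots$, which gives (b): $\mathscr{O}_{kq+1}(\mathscr{U})$ and $\mathscr{O}_{k+1}(\mathscr{T})$ are placed at the common epoch, each with initial projected delivery that epoch plus $\tau$. For (c) I would invoke the rule defining $\mathscr{T}$, that every reduction is applied simultaneously to \emph{all} outstanding orders of both systems; hence, as long as $\mathscr{O}_{kq+1}(\mathscr{U})$ and $\mathscr{O}_{k+1}(\mathscr{T})$ are both outstanding, they carry equal remaining lead time (equal at birth, reduced by equal amounts), which is exactly (c).

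The hard part will be (d), concluding that the paired orders are \emph{delivered} simultaneously, which requires ruling out a premature $\mathscr{T}$-delivery triggered when $\mathscr{U}$ forces one of its \emph{non}-batch-start orders $\mathscr{O}_{n}(\mathscr{U})$ with $n\not\equiv 1 \pmod q$, an order that has no partner in $\mathscr{T}$. I would carry, as part of the induction, the sharpened no-crossing fact that in each system an earlier-placed order has \emph{strictly} smaller remaining lead time than a later one (true for $\mathscr{U}$ as an \emph{LTR} system, and for $\mathscr{T}$ because an earlier order absorbs every reduction a later order does plus those during the placement gap), together with its consequence that $\mathscr{T}$'s soonest outstanding order is the partner of $\mathscr{U}$'s soonest outstanding batch-start order and so has remaining lead time strictly exceeding that of $\mathscr{U}$'s soonest outstanding order. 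Under this invariant the reduction $\Delta t$ used to force a non-batch-start order equals that order's remaining lead time, which is strictly less than $\mathscr{T}$'s soonest remaining lead time, so no $\mathscr{T}$ order is forced; whereas forcing $\mathscr{O}_{kq+1}(\mathscr{U})$ itself forces its equal-lead-time partner $\mathscr{O}_{k+1}(\mathscr{T})$ at the same instant. This yields (d) and propagates the invariant to the next batch.

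Finally, (e) follows by counting. Writing $D(t)$ for demands served and $A(t)$ for the number of $\mathscr{T}$-deliveries by time $t$, and using that each system starts at $r+1$ and serves every demand, $l(\mathscr{U},t)=r+1-D(t)+N(t)$ and $l(\mathscr{T},t)=r+1-D(t)+q\,A(t)$, so that $l(\mathscr{T},t)-l(\mathscr{U},t)=q\,A(t)-N(t)$. By (d), $\mathscr{T}$ has completed its $A(t)$th delivery exactly when $\mathscr{U}$ has delivered through index $(A(t)-1)q+1$ but not index $A(t)q+1$, and since $\mathscr{U}$ delivers in order we get $N(t)\in\{(A(t)-1)q+1,\dots,A(t)q\}$. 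Writing $N(t)=kq+(q-d)$ with $k=A(t)-1$ then gives $l(\mathscr{T},t)-l(\mathscr{U},t)=d$ with $0\le d\le q-1$, which is precisely (e).
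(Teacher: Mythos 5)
Your proposal is correct and follows essentially the same route as the paper: (a) from the hypothesis directly, (b) by counting orders placed per unit served, (c) from the definition of $\mathscr{T}$, (d) from the resulting synchronization of projected delivery epochs, and (e) by counting deliveries against $N(t)$. Your treatment of (d) is more explicit than the paper's one-line deduction from (c) --- you carefully rule out a premature $\mathscr{T}$-delivery forced by a non-batch-start order of $\mathscr{U}$ --- but the underlying mechanism (paired orders carrying equal remaining lead times) is the same.
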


\begin{proof}[Proof of Claim \ref{CLAIM_G}]

By assumption $l(\mathscr{T}, t) \ge l(\mathscr{U}, t)$ on $[t_0, t_*]$, and system $\mathscr{U}$ serves all customer demand. This shows part $a)$.

System $\mathscr{U}$ places a replenishment order after every unit of demand served, whereas system $\mathscr{T}$ places a replenishment order after every $q$th unit served. This combined with $a)$ shows part $b)$.

Part $c)$ follows from part $b)$ and the definition of system $\mathscr{T}$. Part $d)$ follows immediately from $c)$.

In order to show part $e)$, we note that both systems started at the same inventory level $r + 1$ at epoch $t_0$:
$$
l(\mathscr{U}, t_0) = l(\mathscr{T}, t_0) = r + 1
$$
This, plus part $a)$ shows that the difference in inventory level at a later epoch in $[t_0, t_*]$ is then due only to the difference in number of replenishment units delivered to the two systems.
By part $d)$, we have $\nobreak{\omega_{kq + 1}(\mathscr{U}) = \omega_{k + 1}(\mathscr{T})}$ when $\omega_{kq + 1}(\mathscr{U}) \in [t_0, t_*]$.
Part $e)$ therefore follows from the inequality chain
\begin{align}
\omega_{k + 1}(\mathscr{T})
    &= \omega_{kq + 1}(\mathscr{U})                                        \nonumber \\
    &\le  \omega_{kq + 2}(\mathscr{U})                                     \nonumber \\
    &\le  \cdots                                                           \nonumber \\
    &\le  \omega_{kq + q}(\mathscr{U}) = \omega_{(k + 1)q}(\mathscr{U})    \nonumber \\
    &\le \omega_{(k + 1)q + 1}(\mathscr{U})                                \nonumber \\
    &= \omega_{k + 2}(\mathscr{T})                                         \nonumber
\end{align}
The difference between inventory levels immediately after epoch $ \omega_{k + 1}(\mathscr{T}) = \omega_{kq + 1}(\mathscr{U}) $ is $q - 1$. Each successive delivery for system $\mathscr{U}$ at epochs $\omega_{kq + s}(\mathscr{U})$, $s = 2, 3, \dots$, reduces the difference by one.

\end{proof}

\begin{myclaim} \label{CLAIM_H}
Let $c_1 \le c_2 \le \cdots $ be the epochs at which there is a change in inventory level of either or both of systems $\mathscr{U}$ or $\mathscr{T}$.
If $l(\mathscr{U}, t) \le l(\mathscr{T}, t)$ for $t \in [t_0, c_k]$, then $l(\mathscr{U}, t) \le l(\mathscr{T}, t)$ for $t \in [t_0, c_{k+1}]$.
Therefore by induction $l(\mathscr{U}, t) \le l(\mathscr{T}, t)$ for all $t \ge t_0$.
\end{myclaim}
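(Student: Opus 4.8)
The plan is to prove the claim exactly as its statement suggests, by induction on the change epochs $c_1 \le c_2 \le \cdots$, which reduces the whole question to controlling a single jump. Since the inventory levels of both systems are constant on each open interval $(c_k, c_{k+1})$, the inductive hypothesis $l(\mathscr{U},t) \le l(\mathscr{T},t)$ on $[t_0,c_k]$ automatically gives the inequality on all of $[t_0, c_{k+1})$, so the only thing left to verify is that it survives the change at $c_{k+1}$ itself. I would phrase everything in terms of the difference process $D(t) = l(\mathscr{T},t) - l(\mathscr{U},t)$ and show that the invariant $0 \le D \le q-1$ is preserved across each jump; the base case is immediate since $l(\mathscr{U},t_0) = l(\mathscr{T},t_0) = r+1$ gives $D(t_0)=0$.

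In the inductive step I would first apply Claim \ref{CLAIM_G} on $[t_0, c_k]$, whose hypothesis is precisely the inductive hypothesis, so that conclusions (a)--(e) are available up to $c_k$. Part (e) pins down the state just before the jump: $D = q-j$ exactly when the number $N$ of $\mathscr{U}$-deliveries so far satisfies $N = kq + j$. I would then classify the event at $c_{k+1}$ into atomic jumps. A customer demand is served by both systems — $\mathscr{U}$ never loses a sale by the \emph{LTR} property, and $l(\mathscr{U},c_{k+1}^-) \ge 1$ together with $D \ge 0$ forces $l(\mathscr{T},c_{k+1}^-) \ge 1$ — so both levels drop by one and $D$ is unchanged. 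A delivery of order $\mathscr{O}_{kq+1}(\mathscr{U})$ is, by parts (b) and (d) of Claim \ref{CLAIM_G}, simultaneous with the delivery of $\mathscr{O}_{k+1}(\mathscr{T})$: here $l(\mathscr{U})$ rises by $1$ and $l(\mathscr{T})$ by $q$, so $D$ jumps up by $q-1$. Every other delivery is a $\mathscr{U}$-only delivery (there are no $\mathscr{T}$-only deliveries, again by (b)/(d)), which raises $l(\mathscr{U})$ by $1$ and decreases $D$ by $1$.

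The crux is then to show the only decrementing jump, a $\mathscr{U}$-only delivery, can never fire while $D=0$, and this is exactly what the indexing in part (e) buys us. Indeed $D=0$ forces $N \equiv 0 \pmod q$, so the next $\mathscr{U}$-delivery carries index $N+1 \equiv 1 \pmod q$, i.e.\ it is of the synchronized form $k'q+1$ and therefore \emph{increments} $D$ to $q-1$ rather than sending it negative; whereas when $1 \le D \le q-1$ one has $N \not\equiv 0 \pmod q$, the next $\mathscr{U}$-delivery is genuinely $\mathscr{U}$-only, and it lands $D$ in $[0,q-1]$. Thus the two jump types alternate as forced by (e): $D$ leaps to $q-1$ at each synchronized delivery and is paid back down to $0$ by the subsequent $q-1$ single deliveries to $\mathscr{U}$, while demands never move $D$. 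In every case $D$ remains in $[0,q-1]$, so $l(\mathscr{U},c_{k+1}) \le l(\mathscr{T},c_{k+1})$ and the induction closes.

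The main obstacle I expect is bookkeeping at an \emph{LTR} epoch, where a single $c_{k+1}$ may bundle a demand, a forced delivery for $\mathscr{U}$, and a driven delivery for $\mathscr{T}$ at the same instant. I would untangle these into the atomic jumps above (serve, then delivery) and apply the $D$-analysis to each in turn, noting that the serve leaves $N$ and $D$ unchanged so that the forced-delivery sub-event still delivers order $N+1$ with the index parity required by part (e); the simultaneity of the $\mathscr{T}$ delivery when that order has the form $kq+1$ is guaranteed by parts (c)--(d) of Claim \ref{CLAIM_G}. The only genuine content is that the deficit $\mathscr{T}$ builds up by receiving its $q$ units all at once is exactly the quantity consumed before the next synchronized delivery, so that underflow of $D$ is prevented precisely by the synchronization established in Claim \ref{CLAIM_G}.
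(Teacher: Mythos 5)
Your proposal is correct and follows essentially the same route as the paper: induction over the change epochs, reduction to a single atomic jump, and reliance on Claim \ref{CLAIM_G} (in particular the part (e) correspondence between the level difference and the delivery count modulo $q$, and the parts (b)--(d) synchronization of $\mathscr{O}_{nq+1}(\mathscr{U})$ with $\mathscr{O}_{n+1}(\mathscr{T})$) to show the difference can never go negative. Your invariant $0 \le D \le q-1$ with the ``alternating jump types'' bookkeeping is just a cleaner repackaging of the paper's explicit case analysis on whether the levels are equal or strictly ordered at $c_k$ and whether $c_{k+1}$ is a demand or a delivery, including the same special handling of the \emph{LTR}-triggered simultaneous delivery when $l(\mathscr{U}) = l(\mathscr{T}) = 1$.
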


\begin{proof}[Proof of Claim \ref{CLAIM_H}]

Note that in this proof $l(\mathscr{U}, c_k)$ and $l(\mathscr{T}, c_k)$ are taken to be the inventory levels immediately after the change in level has occurred. We will divide the situation into cases.

First suppose that at epoch $c_k$ the inventory level for $\mathscr{U}$ is strictly less than that for $\mathscr{T}$, i.e. $l(\mathscr{U}, c_k) < l(\mathscr{T}, c_k)$.

If there is no customer demand at epoch $c_{k+1}$, then the delivery of a replenishment order causes the change(s) in inventory level. In this case $ l(\mathscr{U}, c_{k+1}) \le l(\mathscr{T}, c_{k+1}) $.

If on the other hand a customer demand occurring at epoch at epoch $c_{k+1}$ causes the change in level(s), then $l(\mathscr{U}, c_{k+1}) < l(\mathscr{T}, c_{k+1})$, except in the case where $l(\mathscr{U}, c_k) = 1$ and $l(\mathscr{T}, c_k) = 2$.
In that case $l(\mathscr{U}, c_{k+1}) = 1 \le l(\mathscr{T}, c_{k+1})$, because of the lead time reduction applied to system $\mathscr{U}$.

Suppose next that $l(\mathscr{U}, c_k) = l(\mathscr{T}, c_k)$, and that no customer demand occurs at epoch $c_{k+1}$. Then the delivery of a replenishment order causes the change in level(s) at epoch $c_{k+1}$, and this delivery is not triggered by a lead time reduction.
From Claim \ref{CLAIM_G} part $e)$, we see that there is an integer $n$ such that in the interval $(c_k, c_{k+1})$ replenishment orders $\mathscr{O}_{nq}(\mathscr{U})$ and $\mathscr{O}_{n}(\mathscr{T})$ have already been delivered, but that $\mathscr{O}_{nq + 1}(\mathscr{U})$ and $\mathscr{O}_{n + 1}(\mathscr{T})$ have not been delivered.
These orders must have been placed no later than epoch $c_k$.
By Claim \ref{CLAIM_G} part $c)$, we see that the remaining lead times on $\mathscr{O}_{nq + 1}(\mathscr{U})$ and $\mathscr{O}_{n + 1}(\mathscr{T})$ must be equal, at all epochs $t_* \in (c_k, c_{k+1})$.
These two orders will then be delivered simultaneously at epoch $c_{k+1}$, and therefore $l(\mathscr{U}, c_{k+1}) < l(\mathscr{T}, c_{k+1})$.

Finally, suppose that $l(\mathscr{U}, c_k) = l(\mathscr{T}, c_k)$, and that there is a customer demand at epoch $c_{k+1}$.

If $l(\mathscr{U}, c_k) > 1$, then
$$ l(\mathscr{U}, c_{k+1}) = l(\mathscr{U}, c_k) - 1 = l(\mathscr{T}, c_k) - 1 = l(\mathscr{T}, c_{k+1}) $$
and the conclusion holds.

If $l(\mathscr{U}, c_k) = 1$, then $l(\mathscr{T}, t_*) = 1 = l(\mathscr{T}, t_*)$ at all epochs $t_* \in (c_k, c_{k+1})$.
As in a previous case, from Claim \ref{CLAIM_G} part $e)$, we see that there is an integer $n$ such that in the interval $(c_k, c_{k+1})$ replenishment orders $\mathscr{O}_{nq}(\mathscr{U})$ and $\mathscr{O}_{n}(\mathscr{T})$ have already been delivered, but that $\mathscr{O}_{nq + 1}(\mathscr{U})$ and $\mathscr{O}_{n + 1}(\mathscr{T})$ have not been delivered.
These orders must have been placed no later than epoch $c_k$.
By Claim \ref{CLAIM_G} part $c)$, we see that the remaining lead times on $\mathscr{O}_{nq + 1}(\mathscr{U})$ and $\mathscr{O}_{n + 1}(\mathscr{T})$ must be equal, at all epochs $t_* \in (c_k, c_{k+1})$.
The customer demand at epoch $c_{k+1}$ will then, via lead time reduction, trigger the arrival of order $\mathscr{O}_{nq + 1}(\mathscr{U})$.
But then order $\mathscr{O}_{n + 1}(\mathscr{T})$ will be delivered at epoch $c_{k+1}$, since its lead time was reduced by the same amount. Therefore $1 = l(\mathscr{U}, c_{k+1}) < l(\mathscr{T}, c_{k+1}) = q$.

\end{proof}

Now from Claim \ref{CLAIM_H} the inventory levels satisfy $l(\mathscr{U}, t) \le l(\mathscr{T}, t)$ for all $t \ge t_0$, and so the conclusions of Claim \ref{CLAIM_G} are always true.
In particular Claim \ref{CLAIM_G} part $e)$ guarantees that at every epoch $t$, the difference between the inventory
\\
\begin{equation}
l(\mathscr{T}, t) - l(\mathscr{U}, t) = d = (k+1)q - N(t)
\label{DIFF_INV_LEVELS_U_AND_T}
\end{equation}
\\
where $\mathscr{O}_{N(t)}(\mathscr{U})$ is the most recently delivered replenishment order for system $\mathscr{U}$, and $0 \le d \le q - 1$.
We desire the long run distribution of the expression (\ref{DIFF_INV_LEVELS_U_AND_T}).

Let $\pi(k)$, for $k = 0, 1, \dots, q - 1$, denote the long run probability that $\nobreak{ N(t) \equiv k \pmod{q} }$.
System $\mathscr{U}$ has a regeneration epoch (\citet{TIJMS_2003}) at an epoch $t$ whenever $l(\mathscr{U}, t) = r + 1$.
By considering periods of extremely slow customer demand, it is clear that for any pre-chosen $k$, system $\mathscr{U}$ will \emph{almost surely} regenerate at some epoch $t$ with $\nobreak{ N(t) \equiv k \pmod{q} }$.
It follows that $\pi(m + k) = \pi(m)$, for all $m$.
Choosing $k = 1$ shows that $\nobreak{\pi(0) = \pi(1) = \dots = \pi(q-1)}$ and therefore $\pi(k)$ is uniformly distributed with $\pi(k) = \frac{1}{q}$.

We have shown

\begin{myclaim} \label{CLAIM_I}
As $t \to \infty$, $l(\mathscr{T}, t) - l(\mathscr{U}, t)$ is uniformly distributed in $\nobreak{ \{0, 1, \dots, q-1 \} }$. Therefore the long run difference in average inventory levels is given by
$$
L(\mathscr{T}) - L(\mathscr{U}) = \frac{1}{q}\Big[ 0 + 1 + \cdots + (q-1) \Big] = \frac{1}{2}(q - 1)
$$
\end{myclaim}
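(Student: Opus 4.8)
The plan is to reduce the distributional statement to a question about a single integer counter and then exploit the regenerative structure of $\mathscr{U}$. By Claim \ref{CLAIM_H}, the hypothesis $l(\mathscr{U},t)\le l(\mathscr{T},t)$ of Claim \ref{CLAIM_G} holds for all $t\ge t_0$, so equation (\ref{DIFF_INV_LEVELS_U_AND_T}) is valid everywhere: the difference $l(\mathscr{T},t)-l(\mathscr{U},t)=d=(k+1)q-N(t)$ is a fixed function of the delivery count $N(t)$, and by Claim \ref{CLAIM_G} part $e)$ it depends only on $N(t)\bmod q$ through a bijection between the residues $\{0,\dots,q-1\}$ and the differences $\{0,\dots,q-1\}$. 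Hence it suffices to prove that the long-run distribution of $N(t)\bmod q$ is uniform, i.e. $\pi(k)=\frac1q$ for each $k$; the uniform distribution of the difference, and then the average $\frac1q\sum_{d=0}^{q-1}d=\frac12(q-1)$, follow immediately.

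For the uniformity I would treat $\mathscr{U}$ as a regenerative process whose regeneration epochs are the returns to $l(\mathscr{U},t)=r+1$ (equivalently, the epochs at which $\mathscr{U}$ has no outstanding order). The residue $N(t)\bmod q$ is determined by the inventory path together with a counter that advances by $1$ at each delivery, so over one regeneration cycle the starting residue advances by $D\bmod q$, where the cycle delivery counts $D$ are i.i.d. The crucial observation is that during a stretch of sufficiently slow demand each unit is replenished before the next demand arrives, so the system returns to $l=r+1$ after a \emph{single} delivery; thus $D=1$ occurs with positive probability. This is exactly what makes the chain of cycle-starting residues on $\mathbb{Z}/q\mathbb{Z}$ irreducible and able to reach every residue, which is the content of the assertion that for any pre-chosen $k$ the system almost surely regenerates at an epoch with $N(t)\equiv k\pmod q$.

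Uniformity then comes from a shift symmetry: because the inventory dynamics of $\mathscr{U}$ do not depend on the value of the counter $N$, relabelling so the counter starts one higher produces the pathwise-identical inventory process with its residue process shifted by $+1$. Since regenerations occur at every residue, the long-run residue distribution $\pi$ does not depend on the residue at which we begin observing, so the shift gives $\pi(m+1)=\pi(m)$ for all $m$; with $\sum_{k=0}^{q-1}\pi(k)=1$ this forces $\pi(k)=\frac1q$. Finally, since $l(\mathscr{T},t)-l(\mathscr{U},t)$ is bounded in $\{0,\dots,q-1\}$ and the process is regenerative, its long-run time-average equals its stationary expectation, yielding $L(\mathscr{T})-L(\mathscr{U})=\frac1q\sum_{d=0}^{q-1}d=\frac12(q-1)$.

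The main obstacle I expect is making the uniformity rigorous rather than heuristic: one must confirm that a genuine long-run residue distribution exists (finite expected cycle length and finite expected deliveries per cycle for the regenerative $\mathscr{U}$), justify that the slow-demand single-delivery excursions occur with positive probability so that the residue chain is irreducible on $\mathbb{Z}/q\mathbb{Z}$, and convert the shift-invariance observation into a clean argument that $\pi$ is independent of the initial residue. The remaining steps — the bijection supplied by Claim \ref{CLAIM_G}, linearity of the long-run average, and the arithmetic sum — are routine.
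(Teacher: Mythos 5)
Your proposal is correct and follows essentially the same route as the paper: reduce the difference $l(\mathscr{T},t)-l(\mathscr{U},t)$ to the residue $N(t)\bmod q$ via Claim \ref{CLAIM_G} part e), use regeneration of $\mathscr{U}$ at $l=r+1$ together with slow-demand excursions to show every residue is reached, and conclude uniformity of $\pi$ from shift invariance. Your added remarks on irreducibility of the residue chain and finite cycle lengths merely make explicit what the paper leaves informal.
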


As a lead time reducing system, $\mathscr{U}$ can be thought of as having been obtained from a standard $\nobreak{ (r, q = 1, \lambda, \tau) }$ lost sales system $\mathscr{U'}$ by excising from sample paths all time spent in a stockout state.

The fraction of time with no on hand inventory for system $\mathscr{U'}$ is given by the \emph{Erlang loss formula} (\ref{ERLANG_LOSS}).
From (\ref{CONDITIONED_AVG_LEVEL_S}) applied to $\mathscr{U'}$, we obtain the formula for the average inventory level for $\mathscr{U}$.
\\
\begin{align}
L(\mathscr{U})
  &= \overline{L}(\mathscr{U'})                                                \nonumber \\
  &= \frac{L(\mathscr{U'})}{1-\gamma(\mathscr{U'})}                            \nonumber \\
  &= (r + 1 - x) + (r+1)\frac{\gamma(\mathscr{U'})}{1 - \gamma(\mathscr{U'})}  \nonumber \\
L(\mathscr{U})
  &= (r + 1 - x) + (r+1)
      \left\{
        \frac{ \displaystyle \frac{ x^{r+1}}{(r+1)!} } { { \displaystyle \sum_{k = 0}^{r} \frac{x^{k}}{k!} } }
      \right\}
  \label{AVG_LEVEL_U}
\end{align}
\\
Combining (\ref{AVG_LEVEL_U}) with Claim \ref{CLAIM_I} we obtain
\\
\begin{equation}
L(\mathscr{T})
=
  (r + 1 - x) + (r+1)
    \left\{
      \frac{ \displaystyle \frac{ x^{r+1}}{(r+1)!} } { { \displaystyle \sum_{k = 0}^{r} \frac{x^{k}}{k!} } }
    \right\}
  + \frac{1}{2}(q - 1)
\label{AVG_LEVEL_T}
\end{equation}
\\
Thus the average inventory level for $L(\mathscr{T})$ given by (\ref{AVG_LEVEL_T}) is the right-hand side of (\ref{NECC_SUFF_UB_GAMMA}). The goal is to show that the inequality in (\ref{NECC_SUFF_UB_GAMMA}) holds for the inventory system $\mathscr{S}$, and so we must show that $\overline{L}(\mathscr{S}) \le L(\mathscr{T})$.

Let $\mathscr{S}$ be the main $\nobreak{ (r, q, \lambda, \tau) }$ system being studied, and let $\overline{\mathscr{S}}$ be a $\nobreak{ (r, q, \lambda, \tau) }$ \emph{LTR} system.
$\overline{\mathscr{S}}$ can be thought of as having been obtained from a standard ${ (r, q, \lambda, \tau) }$ system by excising from sample paths all time spent in a stockout state.
Therefore
\\
$$
\overline{L}(\mathscr{S}) = L(\overline{\mathscr{S}})
$$

Let $\mathscr{S}_1 = \overline{\mathscr{S}}$.
Let $\mathscr{U}$ be the previously considered \emph{LTR} system with parameters ${ (r, q = 1, \lambda, \tau) }$.
Let $\nobreak{ l(\mathscr{S}_1, t_0) = r + 1 = l(\mathscr{U}, t_0) }$, so that both systems start in the same state at epoch $t_0$. In addition, let both systems see the same customer demand realization.

Now, let $t_1$ be the first epoch after $t_0$ where $l(\mathscr{U}, t_1) = 1$ and a customer demand triggers a lead time reduction for $\mathscr{U}$.
Let $\mathscr{S}_2$ be a second copy of $\mathscr{S}_1$, starting in the same state and seeing the same demand realization.
At epoch $t_1$, let $\mathscr{S}_1$ follow its usual lead time reducing policy (only reducing lead times in order to prevent a stockout situation), but let $\mathscr{S}_2$ follow the policy of system $\mathscr{T}$ defined prior to Claim \ref{CLAIM_G}.
System $\mathscr{S}_2$ will then reduce lead times on all outstanding orders by the same amount as system $\mathscr{U}$.
Note that if in fact at epoch $t_1$ system $\mathscr{S}_1$ reduced lead times, then they were reduced by the same amount as for $\mathscr{S}_2$ (Claim \ref{CLAIM_G} parts $c)$ and $e)$).
After epoch $t_1$, let system $\mathscr{S}_2$ return to following the standard lead time reducing policy used by $\mathscr{S}_1$.

We are now in a situation where we may apply Lemma \ref{UB_LEMMA} to systems $\mathscr{S}_1$ and $\mathscr{S}_2$.
We may conclude that the average inventory levels of the two systems over any finite interval $[t_1, t]$ satisfy
$$
L(\mathscr{S}_1, [t_1, t]) \le L(\mathscr{S}_2, [t_1, t])
$$
Since $\mathscr{S}_1$ and $\mathscr{S}_2$ were in precisely the same state at every epoch in $[t_0, t_1)$, it follows that in fact for all $t \ge t_0$
$$
L(\mathscr{S}_1, [t_0, t]) \le L(\mathscr{S}_2, [t_0, t])
$$

Let $t_2$ be the next epoch at which $l(\mathscr{U}, t_2) = 1$ and a customer demand triggers a lead time reduction for $\mathscr{U}$.
Proceeding as before, construct system $\mathscr{S}_3$ which is identical to $\mathscr{S}_2$ on $[t_0, t_2)$, but which at epoch $t_2$ follows the same policy as system $\mathscr{T}$.
At epoch $t_2$ then, $\mathscr{S}_3$ will reduce lead times by the same amount as systems $\mathscr{T}$ and $\mathscr{U}$, whereas system $\mathscr{S}_2$ will follow its usual lead time reducing policy.
After epoch $t_2$, let system $\mathscr{S}_3$ return to following the standard lead time reducing policy used by $\mathscr{S}_1$.
Arguing as before, we have
$$
L(\mathscr{S}_2, [t_0, t]) \le L(\mathscr{S}_3, [t_0, t])
$$

Continuing in this fashion, we obtain a sequence of systems
$$
\overline{\mathscr{S}} = \mathscr{S}_1, \mathscr{S}_2, \mathscr{S}_2, \dots, \mathscr{S}_k, \dots
$$
\\
where $\mathscr{S}_k$ follows the same policy as $\mathscr{T}$ before epoch $t_k$, and follows the same policy as $\overline{\mathscr{S}}$ on and after epoch $t_k$.
For any epoch $t$, there is some $t_k > t$, and therefore on the inteverval $[t_0, t]$ we have
$$
L(\overline{\mathscr{S}}, [t_0, t]) \le L(\mathscr{S}_k, [t_0, t]) = L(\mathscr{T}, [t_0, t])
$$
Since this holds for all finite intervals, it follows that
$$
\overline{L}(\mathscr{S}) = L(\overline{\mathscr{S}}) \le L(\mathscr{T})
$$
and thus the inequality in (\ref{NECC_SUFF_UB_GAMMA}) holds.
This completes the proof of Theorem \ref{UB_THEOREM}.


\section{Numerical Results} \label{SECTION_NR}

Table \ref{TABLE_NUMERIC_RESULTS} shows numerical values for the bounds $\mathscr{LB}$ and $\mathscr{UB}$ over a small range of inputs.
For each reorder point ${ r = 2, 4, 8, \dots, 1024 }$, we evaluated at each reorder quantity ${ q = 2, 3, \dots, r }$, and for mean lead time demand ${ x = \lambda \tau = K r }$ where $K$ ranges through ${ 0.5, 0.75, 1.0, 1.5, 2.0 }$.
The entries in Table \ref{TABLE_NUMERIC_RESULTS} are all aggregates over the range ${ q = 2, 3, \dots, r }$, for fixed values of $r$ and $x$.
The legend below describes the entries in Table \ref{TABLE_NUMERIC_RESULTS}.

\vspace{3mm}
\begin{center}
{ 
  \small
  \begin{tabular}{|l|lc|}
    \hline
    AVG UB SL\%               & Mean value of     & $1 - \mathscr{LB}(r,q,x)$       \\
    AVG LB SL\%               & Mean value of     & $1 - \mathscr{UB}(r,q,x)$       \\
    AVG DIFF                  & Mean value of     & $\mathscr{UB} - \mathscr{LB}$   \\
    MAX DIFF                  & Max value of      & $\mathscr{UB} - \mathscr{LB}$   \\
    MIN DIFF                  & Min value of      & $\mathscr{UB} - \mathscr{LB}$   \\
    \hline
  \end{tabular}
}
\end{center}
\vspace{3mm}

We see that in Table \ref{TABLE_NUMERIC_RESULTS}, the largest average difference (AVG DIFF) occurs at $r = 4$ and $K = 1$ ($\lambda \tau = r = 4$), and that the largest maximum difference (MAX DIFF) occurs at $r = 8$ and $K = 1$ ($\lambda \tau = r = 8$).
When $K$ is kept fixed, the tightness of the service level bounds appears on the whole to improve as $r$ increases. In addition, when $r$ is kept fixed, the bounds appear to improve as $\lambda \tau$ moves away from $r$.

In Figure \ref{WORST_MAX_DIFFS}, we plot the maximum value of MAX DIFF for each reorder point ${ r = 2, 3, \dots, 100 }$.
The factor $K$ was iterated over the range $[0.5, 1.5]$ by increments of $0.01$.
For each reorder point $r$ and each value of $K$, we calculated the MAX DIFF aggregate over the range ${ q = 2, 3, \dots, r }$.
For each $r$, we then selected the maximum value of MAX DIFF over all $K$ considered. For each $r$, this value is the maximum value of $\mathscr{UB} - \mathscr{LB}$ over all $q$ and $K$ considered.
Again it appears likely that the difference between $\mathscr{UB}$ and $\mathscr{LB}$ may tend on the whole to decrease as the reorder point increases.

\begin{figure}[h]
  \caption{Maximum value of MAX DIFF by reorder point}
  \label{WORST_MAX_DIFFS}
  \resizebox{\columnwidth}{!}{
    \includegraphics{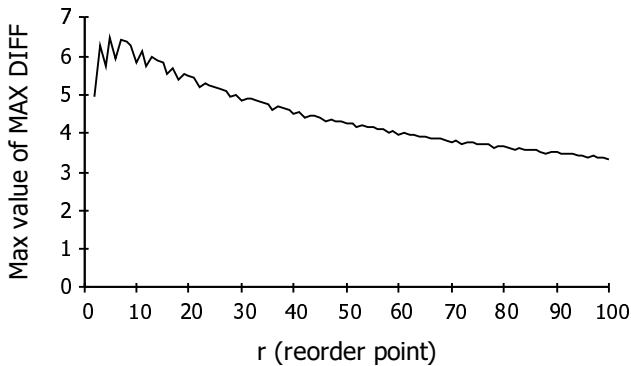}
  }
\end{figure}

\begin{table*}[h]
  \centering
  \caption{
      \newline
      Comparison of service level bounds provided by $\mathscr{LB}$ and $\mathscr{UB}$. \newline
      Entries are aggregates over $q = 2, 3, \dots, r$ for the values of $r$ and $x$ specified by the row and column.
  } \label{TABLE_NUMERIC_RESULTS}
  \resizebox{340pt}{!}{ 
    \begin{tabular}{|r|rrrrrr|}
    \hline
              &             & $K = 0.5$ & $K = 0.75$  & $K = 1$ & $K = 1.5$ & $K = 2$   \\
    \hline
    $r=2$     & AVG UB SL\% & 97.4745   & 93.4371   & 88.0797   & 76.2059   & 65.4676   \\
              & AVG LB SL\% & 95.2381   & 89.5753   & 83.3333   & 71.5789   & 61.9048   \\
              & AVG DIFF    & 2.2364    & 3.8618    & 4.7464    & 4.6270    & 3.5628    \\
              & MAX DIFF    & 2.2364    & 3.8618    & 4.7464    & 4.6270    & 3.5628    \\
              & MIN DIFF    & 2.2364    & 3.8618    & 4.7464    & 4.6270    & 3.5628    \\
    \hline
    $r=4$     & AVG UB SL\% & 98.8653   & 95.3513   & 89.3513   & 74.6444   & 61.8761   \\
              & AVG LB SL\% & 97.1735   & 91.3801   & 84.0794   & 70.0166   & 58.9152   \\
              & AVG DIFF    & 1.6917    & 3.9712    & 5.2718    & 4.6279    & 2.9609    \\
              & MAX DIFF    & 1.8401    & 4.2889    & 5.6346    & 4.8301    & 3.0258    \\
              & MIN DIFF    & 1.3951    & 3.3358    & 4.5463    & 4.2234    & 2.8310    \\
    \hline
    $r=8$     & AVG UB SL\% & 99.7070   & 97.3315   & 91.1333   & 73.4863   & 59.1460   \\
              & AVG LB SL\% & 98.9482   & 94.0050   & 85.9177   & 69.5106   & 57.1057   \\
              & AVG DIFF    & 0.7588    & 3.3265    & 5.2156    & 3.9756    & 2.0404    \\
              & MAX DIFF    & 0.9617    & 4.1430    & 6.2760    & 4.3996    & 2.1109    \\
              & MIN DIFF    & 0.5450    & 2.4457    & 4.0134    & 3.4102    & 1.9060    \\
    \hline
    $r=16$    & AVG UB SL\% & 99.9709   & 98.8840   & 93.2321   & 73.1628   & 58.0321   \\
              & AVG LB SL\% & 99.8342   & 96.7878   & 88.6468   & 70.2620   & 56.8141   \\
              & AVG DIFF    & 0.1366    & 2.0962    & 4.5854    & 2.9008    & 1.2180    \\
              & MAX DIFF    & 0.1656    & 2.5214    & 5.3774    & 3.1609    & 1.2533    \\
              & MIN DIFF    & 0.0932    & 1.4513    & 3.3326    & 2.4177    & 1.1257    \\
    \hline
    $r=32$    & AVG UB SL\% & 99.9994   & 99.6863   & 94.9119   & 72.5549   & 57.0984   \\
              & AVG LB SL\% & 99.9944   & 98.7511   & 91.0831   & 70.7089   & 56.4255   \\
              & AVG DIFF    & 0.0050    & 0.9352    & 3.8289    & 1.8460    & 0.6730    \\
              & MAX DIFF    & 0.0063    & 1.1834    & 4.7048    & 2.0453    & 0.6921    \\
              & MIN DIFF    & 0.0033    & 0.6161    & 2.6455    & 1.5062    & 0.6180    \\
    \hline
    $r=64$    & AVG UB SL\% & 100.0000  & 99.9569   & 96.2880   & 72.2779   & 56.7408   \\
              & AVG LB SL\% & 100.0000  & 99.7382   & 93.2642   & 71.2231   & 56.3845   \\
              & AVG DIFF    & 0.0000    & 0.2187    & 3.0238    & 1.0547    & 0.3563    \\
              & MAX DIFF    & 0.0000    & 0.2776    & 3.7480    & 1.1697    & 0.3661    \\
              & MIN DIFF    & 0.0000    & 0.1413    & 2.0323    & 0.8524    & 0.3262    \\
    \hline
    $r=128$   & AVG UB SL\% & 100.0000  & 99.9984   & 97.3203   & 72.1011   & 56.5268   \\
              & AVG LB SL\% & 100.0000  & 99.9834   & 95.0070   & 71.5312   & 56.3429   \\
              & AVG DIFF    & 0.0000    & 0.0150    & 2.3133    & 0.5698    & 0.1839    \\
              & MAX DIFF    & 0.0000    & 0.0190    & 2.8828    & 0.6320    & 0.1888    \\
              & MIN DIFF    & 0.0000    & 0.0096    & 1.5248    & 0.4579    & 0.1681    \\
    \hline
    $r=256$   & AVG UB SL\% & 100.0000  & 100.0000  & 98.0831   & 72.0381   & 56.4499   \\
              & AVG LB SL\% & 100.0000  & 99.9999   & 96.3572   & 71.7404   & 56.3564   \\
              & AVG DIFF    & 0.0000    & 0.0001    & 1.7259    & 0.2977    & 0.0935    \\
              & MAX DIFF    & 0.0000    & 0.0001    & 2.1551    & 0.3299    & 0.0960    \\
              & MIN DIFF    & 0.0000    & 0.0001    & 1.1246    & 0.2386    & 0.0854    \\
    \hline
    $r=512$   & AVG UB SL\% & 100.0000  & 100.0000  & 98.6328   & 71.9856   & 56.3871   \\
              & AVG LB SL\% & 100.0000  & 100.0000  & 97.3640   & 71.8331   & 56.3400   \\
              & AVG DIFF    & 0.0000    & 0.0000    & 1.2688    & 0.1525    & 0.0471    \\
              & MAX DIFF    & 0.0000    & 0.0000    & 1.5947    & 0.1692    & 0.0484    \\
              & MIN DIFF    & 0.0000    & 0.0000    & 0.8195    & 0.1220    & 0.0430    \\
    \hline
    $r=1024$  & AVG UB SL\% & 100.0000  & 100.0000  & 99.0280   & 71.9620   & 56.3590   \\
              & AVG LB SL\% & 100.0000  & 100.0000  & 98.1062   & 71.8848   & 56.3353   \\
              & AVG DIFF    & 0.0000    & 0.0000    & 0.9218    & 0.0772    & 0.0237    \\
              & MAX DIFF    & 0.0000    & 0.0000    & 1.1614    & 0.0857    & 0.0243    \\
              & MIN DIFF    & 0.0000    & 0.0000    & 0.5919    & 0.0617    & 0.0216    \\
    \hline
    \end{tabular}
    } 
  \label{tab:addlabel}%
\end{table*}


\section{Conclusions} \label{SECTION_CONCL}

In this paper, we have demonstrated that the fraction of sales lost for the $(r,q)$ system under consideration can be conveniently bounded in a manner suitable for quick, back-of-the-envelope estimates.
In Section \ref{SECTION_NR} we saw that the difference between the upper and lower bounds on the the customer fraction of sales lost appears likely to be no more than $6.5\%$ ($0.065$), and in many cases is significantly better than that.
In particular, the estimates we have obtained look to be very useful when the reorder point is relatively high or when the mean lead time demand is significantly different from the reorder point.

An immediate application of the bounds provided in this paper is to refine other common estimates for the fraction of sales lost, such as those in \citet{ZIPKIN_2000} or \citet{HW_1963}.
If, for example, in a particular case the backorder system is used to estimate the fraction of sales lost, and the estimate obtained is more than $\mathscr{UB}$, then one might choose to ignore the backorder estimate and instead use the value $\mathscr{UB}$.

Furthermore, we believe that it should be possible to combine the bounds $\mathscr{LB}$ and $\mathscr{UB}$ in some fashion to develop a new estimate for the fraction of sales lost which improves on other common approximations, while remaining easy-to-compute.

Finally, in Theorem \ref{RELATIONS_THEOREM}, we have related all the performance measures for the system studied.
It is then straightforward to use the bounds $\mathscr{LB}$ and $\mathscr{UB}$ to provide bounds on the total operating costs of the $(r, q)$ continuous review system with lost sales, Poisson unit demand, and constant lead time.


\clearpage

\bibliographystyle{elsarticle-num}
\bibliography{references}

\end{document}